\newcommand{\bb}[1]{\mathbb{#1}}
\newcommand{\mbf}[1]{\mathbf{#1}}
\newcommand{\mc}[1]{\mathcal{#1}}
\newcommand*{\bmat}[1]{\begin{bmatrix}#1\end{bmatrix}}
\newcommand*{\sbmat}[1]{\bigl[ \begin{smallmatrix}#1\end{smallmatrix} \bigr ]}
\newcommand{\orth}{\textup{\texttt{orth}}}
\newcommand{\range}{\textup{\texttt{range}}}
\newcommand{\MATLAB}{\textsc{Matlab}\xspace}
\DeclareMathOperator{\rank}{rank}
\DeclareMathOperator*{\argmin}{argmin}
\newlength{\subcolumnwidth}
\newcommand{\nextsubcolumn}[1][]{%
  \cr\noalign{\hfill}
  \if\relax\detokenize{#1}\relax\else\hsize=#1\setlength{\subcolumnwidth}{\hsize}\fi
}
\newbox{\bigpicturebox}
\title{Frequency-Based Reduced  Models from Purely Time-Domain Data via Data Informativity}
\author{Michael S. Ackermann\thanks{Department of Mathematics, Virginia Tech, Blacksburg , VA, 24061  (\email{amike98@vt.edu}).}
\and Serkan Gugercin\thanks{Department of Mathematics and Division of Computational Modeling and
    Data Analytics, Academy of Data Science, Virginia Tech, Blacksburg, VA 24061} (\email{gugercin@vt.edu}).}
\date{March 2023}
\begin{document}

\maketitle

\begin{abstract}
    {
    Frequency-based methods have been successfully employed in creating high-fidelity data-driven reduced order models (DDROMs) for linear dynamical systems. These methods require access to values (and sometimes derivatives) of the frequency-response function (transfer function) in the complex plane.  These frequency domain values can at times be costly or difficult to obtain (especially if the method of choice requires resampling); instead one may have  access to only time-domain input-output data. 
    The data informativity approach to moment matching  provides a powerful new framework for recovering the required frequency data from a single time-domain trajectory.
    In this work, we analyze and extend upon this framework, resulting in significantly improved conditioning of the associated linear systems, an error indicator, and the removal of an assumption that the system order is known.  This analysis leads to a robust algorithm for recovering frequency information from time-domain data, suitable  for large-scale systems.  We demonstrate the effectiveness of our algorithm by forming frequency-based DDROMs from time-domain data of several dynamical systems.}
\end{abstract}

\begin{keywords}
data informativity, moment matching, rational interpolation, data-driven modeling, transfer function
\end{keywords}

\begin{MSCcodes}
    37M99, 41A20, 65F99, 93A15, 93B15, 93-08
\end{MSCcodes}

\section{Introduction}
\label{sec:Introduction}
High-fidelity dynamical system models often have large state space dimensions, which leads to prohibitively long computation times and large storage requirements.  Model reduction is concerned with finding systems with much smaller state dimension that can well approximate the full-order model to vastly speed up computation and lower memory requirements.
Classical model reduction techniques involve projecting the full-order system dynamics onto a smaller subspace \cite{Antoulas2005ApproxDynamSys,AntoulasBG2020Book,BennerGW2015SurveyProjMethods,RozzaHP2008ReducedBasis}. Examples of such methods include balanced truncation \cite{Moore1981PrincipleComponentAnalysis,MullisR1976BalancedTruncation}, and projection based interpolatory techniques \cite{Antoulas2005ApproxDynamSys,AntoulasBG2020Book}, the reduced basis methods \cite{DrohmannHO2012ReducedBasis,GreplMNP2007ReducedBasis, HaasdonkO2008ReducedBasis, RozzaHP2008ReducedBasis}, and proper orthogonal decomposition \cite{BerkoozHL1993PODTurbulent,Sirovich1987POD}.  These intrusive methods have had great success in finding reduced order models, but are limited in applicability as they require knowledge of a state space realization of the full order model. This motivates learning reduced models directly from data.

In the data-driven setting, one does not assume access to a realization of the full-order model, but only access to pairs of input-output data. Using provided input-output data, we then learn data-driven reduced order models {(DDROMs)} that well approximate the full order model without ever needing a realization of the full model. {We emphasize that in this paper,  we do not assume access to the samples of the internal state variable. Therefore data-driven frameworks that require sampling the internal state variable, such as \cite{bruntonK2019DataDrivenSEbook, Hokanson2017ProjNonlinLS,KutzBBP2016DMDBook, Moore1981PrincipleComponentAnalysis,QianPW2020Lift&Learn,RajendraB2020DLROM,Schmid2022DMD}, are not considered here.}

We restrict our study to linear time-invariant (LTI) systems.  These systems consist of a system of ordinary differential equations that describes the evolution of the state in response to a time-varying input and a linear mapping from the state to output.  LTI systems arise, e.g., from spatial discretization of partial differential equations.  Such systems may be studied in the time-domain or frequency-domain.  One may switch between these equivalent descriptions by the Laplace transform or Z-transform, for continuous- and discrete-time systems, respectively.

Many successful data-driven techniques exist for constructing DDROMs directly from measured frequency-domain information; see, e.g., \cite{AntoulasBG2020Book,BerlijafaG2017RKFIT,DramacGB2015QuadVF,GustavsenS1998VFOrig,hokansonM2018RationalLS,MayoA2007LoewnerFramework,NakatsukasaST2018AAA,semlyenG1999VF,FlaggBG13}.
Some traditionally intrusive methods have been reformulated in the frequency-domain data-driven framework, see, e.g., \cite{BeattieG2012TFIRKA,GoseaGB2021QuadBT,MayoA2007LoewnerFramework}.
Frequency-domain data-driven techniques have been one of the most commonly used and efficient methods for modeling LTI systems. 
Indeed, interpolation in the frequency domain forms the necessary conditions for $\mathcal H_2$ optimal reduced order modeling~\cite{AntoulasBG2020Book,GugercinAB2008H2}. {Frequency-based techniques require access to frequency-response data, which can be, at times, difficult to obtain in an experimental or simulation setting. This is especially true when re-sampling (active sampling) in the frequency domain is needed, as required in, for example, $\mathcal H_2$ optimal reduced order modeling.  In many situations, one may only have access to time-domain data.}

Then, the question is whether one can extract/infer the necessary frequency-domain data, namely the transfer function values and its derivatives at multiple frequencies, from a single time-domain simulation and thus whether one can mimic the accuracy and efficiency of the frequency-domain techniques from  these inferred frequency-domain data.  In \cite{PeherstorferGW2017TDLow}, based on data from a single time-domain simulation, the authors used the discrete Fourier transform to set up a least-squares problem whose solution approximated the true transfer function samples. Our focus in this paper is on the recent framework by Burohman et al.  \cite{burohmanBSC2020informativity} that established \emph{the necessary and sufficient conditions} to \emph{exactly} recover the frequency-domain data from a given  time-domain simulation.
This powerful methodology of  \cite{burohmanBSC2020informativity}
used the concept of data informativity \cite{vanWaardeECT2023DataInfSurvey,VanWaardeETC2020DataInform}
and amounts to checking two rank conditions and solving a linear system all based on only the given time-domain input/output data.

While data informativity for moment matching {(i.e., interpolation)} \cite{burohmanBSC2020informativity} offers an exciting new framework to recover frequency information from time-domain data, various aspects remain to be resolved for effective use in practical settings. For example, the theoretical framework requires exact rank computations for 
ill-conditioned matrices (due to the associated Hankel-like structure), which will be a rather difficult task numerically. Moreover, the theory  requires solving (potentially large) linear systems that are  ill-conditioned. Also, 
the knowledge of the system order is assumed which in a data-driven setting is undesirable. Our work addresses these issues and provides a robust numerical implementation while providing an error indicator purely based on the given data.
The main contributions of our work are as follows:

\begin{itemize}
    \item \Cref{sec:OrthSubSpace} analytically investigates the conditioning of the resulting linear systems and provides a method for calculating frequency information that does not change theoretical properties of the original solution while addressing numerical ill-conditioning concerns.
    \item \Cref{sec:windowing} motivates the need for and provides an error indicator for the recovered frequency information.
    \item \Cref{sec:calc_n_hat} provides the analysis to remove the assumption that the system order is known.
    \item \Cref{Sec:Algorithm} presents the resulting algorithmic implementation based on  the theoretical and numerical considerations. 
    \item \Cref{sec:Results} evaluates our algorithm on several  reduced order modeling problems, showing that we are able to construct frequency-based ROMs from purely time-domain data.
\end{itemize}
We begin, in \cref{sec:DataInform}, by summarizing key results of \cite{burohmanBSC2020informativity}, which form the foundation of our work. 

\section{The data informativity framework for moment matching}
\label{sec:DataInformativity}
\label{sec:DataInform}
This section summarizes key results from \cite{burohmanBSC2020informativity}, which provides the theoretical framework to recover frequency-response data (transfer function values and its derivatives) of discrete-time linear dynamical systems from time-domain data based on the concept of data informativity~\cite{VanWaardeETC2020DataInform}. 
The background and main results are given here. For proofs and discussion, we refer the reader to the original source~\cite{burohmanBSC2020informativity}. The framework of~\cite{burohmanBSC2020informativity} will form the foundation of our  contributions, discussed in  later sections.

\subsection{{Recovering transfer function values from time-domain data}}
\label{subsec:ProbForm}  
Let $\mathcal S$ be a minimal\footnote{The assumption of minimality is made to simplify the notation in later sections. \cref{prop:DontNeedFulln} will show that this assumption is not indeed needed.},
order $n$  discrete-time single-input-single-output (SISO) system with state-space realization
\begin{equation}
    \label{eq:LinDiscSys}
    \mathcal S: \left \{
    \begin{aligned}
    \mbf x[k+1] &= \mbf A\mbf x[k] + \mbf bu[k]\\
    y[k+1] &= \mbf c^{\top}\mbf x[k],\\
    \end{aligned} \right.
\end{equation}
where $\mbf A \in \bb R^{n \times n}, \,\mbf b \in \bb R^n,\, \mbf c^{\top} \in \bb R^{1\times n}$;
$\mbf x[k] \in \bb R^n$ is the state of \cref{eq:LinDiscSys} at time $k$; $u[k] \in \bb R$ is the input at time $k$; and $y[k]\in \bb R$ is the  output at time  $k$.
{Equivalently, the input/output behavior of the dynamical system \cref{eq:LinDiscSys} can be described via the difference equation 
\begin{align}
     y[t+n] +   p_{n-1}y[t+n-1]&+\cdots  +   p_1y[t+1] +   p_0y[t] = \nonumber \\  & q_n u[t+n] +   q_{n-1}u[t+n-1] + \cdots +   q_1u[t+1] +   q_0u[t],
    \label{eq:diffEQ}
\end{align}
for some coefficient vectors 
\begin{equation}
    \label{eq:pqvecsdef}
    {\mathbf p} = \begin{bmatrix} p_0 & p_1 &\ldots & p_{n-1}\end{bmatrix} \in \mathbb R^{1 \times n}\quad \text{and}\quad {\mathbf q} = \begin{bmatrix} q_0 & q_1 & \ldots &  q_n\end{bmatrix} \in \mathbb R^{1 \times (n+1)}.
\end{equation}
Following~\cite{burohmanBSC2020informativity}, we use the forward shift operator {$z$ defined by $zf[t] = f[t+1]$} to rewrite \cref{eq:diffEQ} as
\begin{equation}
    \label{eq:diffEQShiftOp}
    (z^n + p_{n-1}z^{n-1} + \cdots + p_1z + p_0)y[t] = (q_nz^n + q_{n-1}z^{n-1}+ \cdots+q_1z + q_0)u[t]. 
\end{equation}
Label the polynomials on the left and right sides of \cref{eq:diffEQShiftOp} as
{
\begin{equation}
        P(z) = z^n + p_{n-1}z^{n-1} + \cdots +  p_0~~~\mbox{and}~~~
        Q(z) = q_nz^n + q_{n-1}z^{n-1}+ \cdots+q_0.
\end{equation}
}
Then the transfer function of the dynamical system \cref{eq:LinDiscSys} is
\begin{equation}
    \label{eq:TransFunIsRatioanlExpanded}
    H(z) = \frac{Q(z)}{P(z)} = \frac{q_nz^n + q_{n-1}z^{n-1}+ \cdots+q_1z + q_0}{z^n + p_{n-1}z^{n-1} + \cdots + p_1z + p_0},
\end{equation}
which is an order $n$ rational function in $z$.  Based on  \cref{eq:TransFunIsRatioanlExpanded}, we will call $\mbf p \in \bb R^{1\times n}$ the \emph{denominator coefficient vector} and $\mbf q \in \bb R^{1\times n+1}$ the \emph{numerator coefficient vector}.  Note that if one were to have access to $\mbf A, \mbf b,$ and $\mbf c$ in \cref{eq:LinDiscSys}, then 
\begin{equation}
    H(z) = \mbf c^{\top}(z\mbf I - \mbf A)^{-1}\mbf b.
\end{equation}
The rest of this section outlines how we may go from time-domain input-output data
$$\bb U = \begin{bmatrix}u[0] & u[1] & \ldots & u[T]\end{bmatrix}^{\top} \in \mathbb R^{T+1} \quad \mbox{and} \quad \bb Y = \begin{bmatrix}y[0] & y[1] & \ldots & y[T]\end{bmatrix}^{\top} \in \mathbb R^{T+1}$$
to the values of the transfer function $H(\sigma)$ for given $\sigma \in \bb C.$  Specifically, we assume that the numerator coefficient vector $\mbf p$ and denominator coefficient vector $\mbf q$ are \emph{unknown}, but that the system order $n$ and one time-domain input-output data pair $(\bb U,\bb Y)$ are \emph{known}.  We assume to know $n$ as we follow the discussion of \cite{burohmanBSC2020informativity} in this section.  This assumption will be removed in \cref{sec:calc_n_hat} as one component of the computational and analytical framework that we develop.
}

To begin, assume $T \geq n$, let $t = 0$, and rearrange \cref{eq:diffEQ} as 
\begin{equation}
    \label{eq:diffEqAsInnerProduct}
    \begin{bmatrix}{\mbf q} & -{\mbf p}\end{bmatrix}
    \begin{bmatrix}\bb U(0:n) \\ \bb Y(0:n-1)\end{bmatrix} = y[n] \quad \text{where} \quad {\bb U(0:n) \coloneqq} \begin{bmatrix}u[0] & u[1] & \ldots & u[n]\end{bmatrix}^{\top},
\end{equation}
and similarly for $Y(0:n-1)$. Since $\bb U$ and $\bb Y$ are generated by $\mathcal S$, they must satisfy \cref{eq:diffEQ} for each $t = 0, 1, \ldots, T-n$.  Thus for each $t$, one can express \cref{eq:diffEQ} in the form \cref{eq:diffEqAsInnerProduct}, leading to $T-n+1$ inner products of the form \cref{eq:diffEqAsInnerProduct}, which can be written as the linear system
\begin{equation}
    \begin{bmatrix}{\mathbf q}&-{\mathbf p}\end{bmatrix}\begin{bmatrix}\bb H_n(\bb U)\\\overline{\bb H}_n(\bb Y)\end{bmatrix} = \begin{bmatrix}y[n]&y[n-1]&\ldots&y[T]\end{bmatrix},
    \label{eq:ExplainDataTrue}
\end{equation}
where 
\begin{equation}
\label{eq:hankleMatrixDef}
    \bb H_n(\bb U) = 
    \begin{bmatrix}
        u[0]&u[1]&\ldots &u[{T-n}]\\
        u[1]&u[2]&\ldots&u[{T-n+1}]\\
        \vdots&\vdots &\ddots & \vdots \\
        u[n] & u[{n+1}] & \ldots & u[T]
    \end{bmatrix} 
    \in \bb R^{(n+1) \times (T-n + 1)}
\end{equation}
is the Hankel matrix of depth $n$ (and similarly for $\bb H_n(\bb Y)$), and $\overline{\bb H}_n(\bb Y)$ $\in \bb R^{n\times (T-n+1)}$ is $\bb H_n(\bb Y)$ with the last row removed. Now  one can define the condition for when a system 
with transfer function $\widetilde H(z)$ having the numerator and denominator coefficient  vectors
$\tilde{\mbf q}$ and $\tilde{\mbf p}$ as in \cref{eq:pqvecsdef}
can explain the data in ($\bb U,\bb Y$).
\begin{definition}
    An order $n$ system 
    with numerator and denominator coefficient vectors $\tilde{\mbf q}$ and $\tilde{\mbf p}$ as in~\cref{eq:pqvecsdef} can \emph{explain the data} $(\bb U,\bb Y)$ if
    \begin{equation}
    \begin{bmatrix}\tilde{\mathbf q}&-\tilde{\mathbf p}\end{bmatrix}\begin{bmatrix}\bb H_n(\bb U)\\\overline{\bb H}_n(\bb Y)\end{bmatrix} = \begin{bmatrix}y[n]&y[n-1]&\ldots&y[T]\end{bmatrix}.
    \label{eq:ExplainData}
\end{equation}
\end{definition}
In other words, the coefficient vectors $\tilde{\mbf p}$ and $\tilde{\mbf q}$ can \emph{explain the data} in $(\bb U,\bb Y)$ if a system with the coefficient vectors $\tilde{\mbf p}$ and $\tilde{\mbf q}$ produces the output $\bb Y$ when driven by the input $\bb U$.
The set of all coefficient vectors $\begin{bmatrix}\tilde{\mathbf q}&-\tilde{\mathbf p}\end{bmatrix} \in \bb R^{2n+1}$ (corresponding to order $n$ systems) that can explain the data in $(\bb U,\bb Y)$ is
denoted by 
\begin{equation}
\Sigma^{n}_{(\bb U,\bb Y)} = \left \{\begin{bmatrix}\tilde{\mathbf q}&-\tilde{\mathbf p}\end{bmatrix} \in \bb R^{1 \times (2n+1)} \mid \text{ \cref{eq:ExplainData} holds}\right \}.
\label{set:ExplainData}
\end{equation}

Following \cite{burohmanBSC2020informativity}, we call $ H(\sigma)$ the {\emph{zeroth moment}}  of  $\mathcal S$ at $\sigma$ and write $H(\sigma) = M_0$ (or $H(\sigma) = M_0(\sigma)$ when we wish to emphasize $\sigma$). Similar to \cref{eq:ExplainDataTrue}, one can rewrite \cref{eq:TransFunIsRatioanlExpanded} evaluated at $z = \sigma$ as a linear system
\begin{equation}
    \begin{bmatrix}{\mathbf q} &-{\mathbf p}\end{bmatrix}\begin{bmatrix}\gamma_n(\sigma)\\M_0 \gamma_{n-1}(\sigma)\end{bmatrix} = M_0 \sigma^n,
    \label{eq:0MomentTrue}
\end{equation}
where
$\gamma_k(\sigma) = \begin{bmatrix}1 & \sigma & \sigma^2 & \ldots & \sigma^k\end{bmatrix}^{\top}  \in \bb C^{k+1}.$
Then,~\cref{eq:0MomentTrue} leads to the condition under which a system with coefficient vectors $\tilde{\mbf q}$ and $\tilde{\mbf p}$ has zeroth moment $M_0$ at $\sigma$.
{
\begin{definition} An order $n$ transfer function $\widetilde H(z)$ with numerator and denominator coefficient  vectors $\tilde{\mbf q}$ and $\tilde{\mbf p}$ as in~\cref{eq:pqvecsdef} has \emph{zeroth moment $M_0$ at $\sigma$} if
     \begin{equation}
    \begin{bmatrix}\tilde{\mathbf q} &-\tilde{\mathbf p}\end{bmatrix}\begin{bmatrix}\gamma_n(\sigma)\\M_0 \gamma_{n-1}(\sigma)\end{bmatrix} = M_0 \sigma^n.
    \label{eq:0Moment}
\end{equation}
\end{definition}
}
Then the set of all order $n$ systems with zeroth moment $M_0$ at $\sigma$ is
\begin{equation}
\Sigma^{n,0}_{\sigma,M_0} = \left \{\begin{bmatrix}\tilde{\mathbf q}&-\tilde{\mathbf p}\end{bmatrix} \in \bb R^{1 \times (2n+1)} \mid \text{ \cref{eq:0Moment} holds}\right \}
\label{set:0Moment}.
\end{equation}
To find the theoretical conditions under which one can recover a unique transfer function value $H(\sigma) = M_0$ from input-output data $(\bb U,\bb Y)$, \cite{burohmanBSC2020informativity}~requires that all systems that explain the data $(\bb U,\bb Y)$ must also have transfer function value $M_0$ at $\sigma$.
As stated earlier, the goal in~\cite{burohmanBSC2020informativity} is not to fully identify the underlying system $\mathcal S$. Instead, 
 given a frequency sampling point $\sigma \in \mathbb C$, 
the goal is to determine whether the data is informative enough to recover $H(\sigma)$, the transfer function value at 
$z=\sigma$.
\begin{definition}
    \label{def:informForinterp}
    The data $(\bb U,\bb Y)$ are \emph{informative} for {interpolation (moment matching)} at $\sigma$ if there exists a unique $M_0 \in \bb C$ such that
    \begin{equation}
        \label{eq:CanInterpolateDef}
        \Sigma^n_{(\bb U,\bb Y)} \subseteq \Sigma_{\sigma, M_0}^{n,0}.
    \end{equation}
\end{definition}
Leveraging \cref{eq:0Moment} and \cref{eq:ExplainData} and applying \cref{def:informForinterp}, one is presented with a method for calculating $M_0$.
\begin{theorem}[{\cite[Lem.~10 \& Thm.~12]{burohmanBSC2020informativity}}]
    \label{thm:ExistUniqueOrig}
    Assume access to input data $\bb U$ and output data $\bb Y$.  Let $n$ be the order of the system and let $\sigma, M_0 \in \mathbb C$.  Then the inclusion $\Sigma^n_{(\bb U,\bb Y)} \subseteq \Sigma_{\sigma,M_0}^{n,0}$ holds if and only if there exists $\xi \in \mathbb C^{T-n+1}$ such that
    \begin{equation}
        \begin{bmatrix}\bb H_n(\bb U)&0\\\bb H_n(\bb Y)&-\gamma_n(\sigma)\end{bmatrix}
        \begin{bmatrix}\xi\\M_0\end{bmatrix}
        = \begin{bmatrix}\gamma_n(\sigma)\\0\end{bmatrix}.
        \label{eq:calcM0Orig}
    \end{equation}  
Moreover, 
     the data $(\bb U$,$\bb Y)$ are informative for interpolation at $\sigma$ (i.e., the inclusion $\Sigma^n_{(\bb U,\bb Y)} \subseteq \Sigma_{\sigma,M_0}^{n,0}$ holds with a unique $M_0$) if and only if
    \begin{equation}
        \label{eq:ExistCondOrig}
        \rank 
        \begin{bmatrix}\bb H_n(\bb U) & 0 & \gamma_n(\sigma)\\
            \bb H_n(\bb Y) & \gamma_n(\sigma) & 0
        \end{bmatrix}
        =
        \rank 
        \begin{bmatrix}
            \bb H_n(\bb U)&0\\\bb H_n(\bb Y)& \gamma_n(\sigma)
        \end{bmatrix}
    \end{equation}
    and
    \begin{equation}
        \rank \begin{bmatrix}\bb H_n(\bb U)&0\\\bb H_n(\bb Y)& \gamma_n(\sigma)\end{bmatrix}
        =
        \rank \begin{bmatrix}\bb H_n(\bb U) \\ \bb H_n(\bb Y)\end{bmatrix} + 1.
        \label{eq:UniqueCondOrig}
    \end{equation}
\end{theorem}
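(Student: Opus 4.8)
The plan is to treat both defining conditions as linear equations in the single unknown row vector $\begin{bmatrix}\tilde{\mbf q} & -\tilde{\mbf p}\end{bmatrix}$, and then to invoke the elementary fact that a \emph{nonempty} affine solution set of one linear system is contained in the solution set of a single additional linear equation precisely when that extra equation is a linear combination of the defining ones with a consistent right-hand side. Throughout I write $w = \begin{bmatrix}\tilde{\mbf q} & -\tilde{\mbf p}\end{bmatrix}^{\top} \in \bb C^{2n+1}$ for the common unknown.

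First I would rewrite the two membership relations using the natural block splittings. Observe that $\bb H_n(\bb Y)$ is $\overline{\bb H}_n(\bb Y)$ stacked on top of its last row $b^{\top} = \begin{bmatrix} y[n] & \cdots & y[T]\end{bmatrix}$, which is exactly the right-hand side of \cref{eq:ExplainData}; likewise $\gamma_n(\sigma)$ is $\gamma_{n-1}(\sigma)$ stacked on $\sigma^n$. With $A = \begin{bmatrix}\bb H_n(\bb U)\\\overline{\bb H}_n(\bb Y)\end{bmatrix}$, membership in $\Sigma^n_{(\bb U,\bb Y)}$ reads $w^{\top}A = b^{\top}$, while membership in $\Sigma^{n,0}_{\sigma,M_0}$ reads $w^{\top}c = d$ with $c = \begin{bmatrix}\gamma_n(\sigma)\\ M_0\gamma_{n-1}(\sigma)\end{bmatrix}$ and $d = M_0\sigma^n$. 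Since the data-generating system $\mathcal S$ supplies a vector in $\Sigma^n_{(\bb U,\bb Y)}$, this set is nonempty, so the containment lemma applies: the inclusion holds iff $c \in \Range(A)$, say $c = A\xi$, and the right-hand sides are consistent, i.e. $b^{\top}\xi = d$. Writing $c = A\xi$ out gives $\bb H_n(\bb U)\xi = \gamma_n(\sigma)$ and $\overline{\bb H}_n(\bb Y)\xi = M_0\gamma_{n-1}(\sigma)$, while $b^{\top}\xi = d$ gives $b^{\top}\xi = M_0\sigma^n$; recombining the last two blocks back into $\bb H_n(\bb Y)\xi = M_0\gamma_n(\sigma)$ yields precisely the block system \cref{eq:calcM0Orig}, proving the first assertion. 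The only real obstacle here is bookkeeping: keeping the $\overline{\bb H}_n(\bb Y)/(\text{last row})$ and $\gamma_{n-1}(\sigma)/\sigma^n$ splittings aligned so the two separate conditions fuse into the single stated system.

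For the rank characterization I would read \cref{eq:calcM0Orig} as $N\begin{bmatrix}\xi\\M_0\end{bmatrix} = f$ with $N = \begin{bmatrix}\bb H_n(\bb U)&0\\\bb H_n(\bb Y)&-\gamma_n(\sigma)\end{bmatrix}$ and $f = \begin{bmatrix}\gamma_n(\sigma)\\0\end{bmatrix}$. Existence of \emph{some} admissible $M_0$ is, by part one, solvability of this system, i.e. the Rouch\'e--Capelli condition $\rank\begin{bmatrix} N & f\end{bmatrix} = \rank N$; since rank is invariant under scaling the $\gamma_n(\sigma)$ column by $-1$, this is exactly \cref{eq:ExistCondOrig}. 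For uniqueness of $M_0$ I would analyze $\kernel N$: a kernel vector $\begin{bmatrix}\eta\\\mu\end{bmatrix}$ satisfies $\bb H_n(\bb U)\eta = 0$ and $\bb H_n(\bb Y)\eta = \mu\gamma_n(\sigma)$, and the $M_0$-component is constant across all solutions iff no kernel vector has $\mu\neq 0$. Homogenizing (dividing by $\mu$), such a vector exists iff $\begin{bmatrix}0\\\gamma_n(\sigma)\end{bmatrix} \in \Range\begin{bmatrix}\bb H_n(\bb U)\\\bb H_n(\bb Y)\end{bmatrix}$, i.e. iff appending this column leaves the rank unchanged; hence uniqueness is equivalent to the rank increasing by one, which is \cref{eq:UniqueCondOrig}.

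Combining the two pieces gives the claim: informativity means existence of a \emph{unique} $M_0$, which is existence (\cref{eq:ExistCondOrig}) together with uniqueness (\cref{eq:UniqueCondOrig}). Note the backward implication is clean precisely because both conditions are assumed simultaneously, so consistency is never vacuous. I expect the uniqueness step to be the most delicate part, namely correctly translating the statement ``the $M_0$-coordinate is constant on the affine solution set'' into the homogeneous range membership and then into the rank-jump-by-one condition.
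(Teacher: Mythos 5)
Your proof is correct and follows essentially the same route as the proof of this result (which the paper itself does not reprove, deferring to \cite{burohmanBSC2020informativity}): the containment of the nonempty real affine set $\Sigma^n_{(\bb U,\bb Y)}$ in the solution set of one extra linear equation is reduced to range membership $c \in \Range(A)$ plus right-hand-side consistency $b^{\top}\xi = d$, recovering \cref{eq:calcM0Orig} exactly as in Lemma~10 of the source, and your Rouch\'e--Capelli argument for \cref{eq:ExistCondOrig} together with the kernel analysis (no null vector of the coefficient matrix with nonzero last component, homogenized to the rank-jump condition \cref{eq:UniqueCondOrig}) reproduces Theorem~12. The lone point worth tightening is that $\Sigma^n_{(\bb U,\bb Y)}$ consists of \emph{real} row vectors while $c$ and $d$ are complex, so the range-membership step should be applied to real and imaginary parts separately (harmless, since $A$ and $b$ are real, and this is precisely what produces the complex $\xi \in \bb C^{T-n+1}$ of the statement).
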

The existence of $M_0$ such that \cref{eq:calcM0Orig} holds is equivalent to \cref{eq:ExistCondOrig}, which guarantees that the right-hand side of \cref{eq:calcM0Orig} is in the range of the matrix on the left-hand side.  However, the informativity for interpolation at $\sigma$ requires the uniqueness of $M_0$. This uniqueness condition is equivalent \cref{eq:UniqueCondOrig}.

{
\begin{remark}
    For frequency-based modeling / model reduction of linear dynamics, it is advantageous to have access to the first derivative of the transfer function at sampling frequencies. Indeed, optimal model reduction in the $\mathcal{H}_2$ norm requires not only interpolating  transfer function values at some selected points but also its derivatives, see,~e.g.,~\cite{AntoulasBG2020Book, GugercinAB2008H2}.  
    Assuming the data $(\bb U,\bb Y)$ are informative for interpolation at $\sigma \in \bb C$, the data is informative for \emph{Hermite interpolation} (or interpolation of $H'$) at $\sigma$ if and only if there exists $\xi \in \bb C^{T-n+1}$ such that
    \begin{equation}
        \begin{bmatrix}\bb H_n(\bb U)&0\\\bb H_n(\bb Y)&-\gamma_n(\sigma)\end{bmatrix}
        \begin{bmatrix}\xi\\M_1\end{bmatrix}
        = \begin{bmatrix}\gamma^{(1)}_n(\sigma)\\M_0\gamma_n^{(1)}(\sigma)\end{bmatrix}
        \label{eq:calcMkOrig}
    \end{equation}
    where
    \begin{equation}  \label{eq:gamma1}
        \gamma^{(1)}_k(\sigma) = \begin{bmatrix}0 & 1 & 2\sigma & \ldots & k\sigma^{k-1}\end{bmatrix}^{\top} \in \bb C^{k+1}
    \end{equation}
    holds for a unique $M_1$.  Moreover, the data is informative for Hermite interpolation at $\sigma$ if and only if the data is informative for interpolation at $\sigma$ and
    \begin{equation}
        \rank \begin{bmatrix}\bb H_n(\bb U) & 0 & \gamma^{(1)}_n(\sigma)\\
         \bb H_n(\bb Y) & \gamma_n(\sigma) & M_0\gamma^{(1)}_n(\sigma)\end{bmatrix}
         =
         \rank \begin{bmatrix}\bb H_n(\bb U)&0\\\bb H_n(\bb Y)& \gamma_n(\sigma)\end{bmatrix}.
    \label{eq:ExistCondMk}
    \end{equation}
    Note that there is only one rank condition (for existence) in this case, as assuming the data is informative for interpolation provides the uniqueness condition.  Here we have only presented the final result but the derivation follows much the same as the interpolation case; for details, see \cite{burohmanBSC2020informativity}.
\end{remark}
}

\section{{Theoretical and algorithmic analysis for 
a robust implementation of data informativity for moment matching}}
\label{sec:Implementation}

The results presented in the previous section, taken from \cite{burohmanBSC2020informativity}, give us a powerful theoretical framework to recover frequency information $M_0=H(\sigma)$ from time-domain input data $\bb U$ and output data $\bb Y$ (in exact arithmetic and under the additional assumption that the system order $n$ is known.) One first checks the conditions \cref{eq:ExistCondOrig} and \cref{eq:UniqueCondOrig} and then solves the linear system given in \cref{eq:calcM0Orig} to obtain $M_0$. However, in a computational setting, key issues remain.  First of all, the presence of the Hankel matrices $\bb H_n(\bb U)$ and $\bb H_n(\bb Y)$
is expected to yield  ill-conditioned linear systems since Hankel matrices are usually ill-conditioned
\cite{Florian2010HankCondNum,Beckermann2000HankCondNum,BeckermannT2019SValDisplacement}. Additionally (and especially due to this ill-conditioning), the rank decisions to check the conditions \cref{eq:ExistCondOrig} and \cref{eq:UniqueCondOrig} 
(and the corresponding subspace computations we introduce later)
are expected to be numerically ambiguous. Due to the potentially imprecise rank decisions (imprecise/truncated subspaces) and ill-conditioned linear systems to solve combined with the fact the input/output data is most likely noisy, 
in practice we expect $M_0$ (recovered from data) to \emph{approximate} but not exactly equal $H(\sigma)$.
These observations are the main motivations for the analysis and computational framework we develop in the rest of the paper.  We use the convention that $M_0 \in \bb C$ is the approximation to $H(\sigma)$ recovered by solving \cref{eq:calcM0Orig}.

{
We first investigate (and demonstrate) the ill-conditioning of the linear system \cref{eq:calcM0Orig} 
in \cref{sec:orthogonalization} and provide methods to overcome it.  In \cref{sec:RankCondsAsMatVecs}, we revisit the rank conditions \cref{eq:ExistCondOrig} and \cref{eq:UniqueCondOrig}, and provide alternatives that are {faster to check} and numerically easier to implement.  Although this analysis and the resulting modification significantly improves  ill-conditioning, in a computational setting involving various numerical rank decisions, one still expects some error in recovered transfer function values. Therefore, to test the accuracy of the recovered values, in \cref{sec:windowing} we provide an error indicator based on a windowing strategy.  In \cref{sec:calc_n_hat} we show that exact knowledge of the system order $n$ is not required to recover $M_0 \approx H(\sigma)$ with good accuracy.  We then put the whole analysis and computational framework together in \cref{Sec:Algorithm} to provide a robust numerical algorithm. Even though
the discussion in this section is limited to recovery of the value of the transfer function at a given point,  all techniques and analyses extend directly to recovery of derivative information.
}

\subsection{Improving and predicting conditioning}
\label{sec:OrthSubSpace}
As stated before, the presence of Hankel matrices in the linear system~\cref{eq:calcM0Orig} is expected to lead to ill-conditioning and potential loss of accuracy in the recovery of the transfer functions values.  In \cref{sec:orthogonalization} we provide a (typically) much better conditioned linear system whose solution is theoretically equivalent to that of \cref{eq:calcM0Orig} and thus still allows for recovery of $H(\sigma)$ from $(\bb U,\bb Y)$.  This new linear system also leads to a faster and more robust method of checking the rank conditions \cref{eq:ExistCondOrig} and \cref{eq:UniqueCondOrig}, which is explored in \cref{sec:RankCondsAsMatVecs}.

For brevity in the coming sections, we introduce the following notation:
\begin{center}
  \begin{subequations}
    \parbox[c]{\textwidth*7/30}{
    \begin{equation}
        \label{eq:GnDef}
        \mbf G_n \coloneqq \bmat{\bb H_n(\bb U)\\ \bb H_n(\bb Y)},
    \end{equation}
    }~\hspace{0.19in}
    \parbox[c]{\textwidth*10/30}{
    \begin{equation}
        \label{eq:zSigmaDef}
        \mbf z(\sigma) \coloneqq \bmat{\mbf 0^{\top} & -\gamma_n(\sigma)^{\top}}^{\top},
    \end{equation}
    }~\hspace{0.19in}
    \parbox[c]{\textwidth*9/30}{
    \begin{equation}
        \label{eq:bSigmaDef}
        \mbf b(\sigma) \coloneqq \bmat{\gamma_n(\sigma)^{\top} & \mbf 0^{\top}}^{\top}.
    \end{equation}
    }
\end{subequations}  
\end{center}

\subsubsection{Orthogonalization}
\label{sec:orthogonalization}
The coefficient matrices in linear systems~\cref{eq:calcM0Orig} to be solved are composed of Hankel matrices, which are typically ill-conditioned \cite{Florian2010HankCondNum,Beckermann2000HankCondNum,BeckermannT2019SValDisplacement}. 
\cref{prop:GeneralOrthSubspaceWorks} and \cref{cor:OrthSubspaceOK} below provide a way to generate linear systems that typically have much lower condition numbers than \cref{eq:calcM0Orig}, while maintaining the same theoretical solution for $M_{0}$.

\begin{lemma}
    \label{prop:GeneralOrthSubspaceWorks}
    Let $\mbf G \in \bb C^{s \times r}$, $\mbf z \in \bb C^s$, and 
$\mbf b \in \bb C^s$ satisfy
    \begin{center}
      \begin{subequations}
        \parbox[c]{\textwidth*33/60}{
        \begin{equation}
            \label{eq:existCondGeneral}
            \rank\bmat{\mbf G & \mbf z & \mbf b} = \rank\bmat{\mbf G & \mbf z}
            \quad \text{and}
        \end{equation}
        }
        \parbox[c]{\textwidth*26/60}{
        \begin{equation}
            \label{eq:uniqueCondGeneral}
            \rank\bmat{\mbf G & \mbf z} = \rank\bmat{\mbf G} + 1.
        \end{equation}
        }
    \end{subequations}  
    \end{center}
    Let 
    \begin{equation}
        \label{eq:SVDofGeneralSys}
            \mbf U\Sigma \mbf V^* = \mbf G
    \end{equation}
    be the short SVD of $\mbf G,$ i.e., for some $p \leq \min(s,r)$, $\mbf U \in \bb R^{s\times p}$ and $\mbf V \in \bb R^{r \times p}$ have orthonormal columns and $\Sigma \in \bb R^{p\times p}$ is diagonal with strictly positive diagonal entries.  Then there exist $\mbf x \in \bb C^{r+1}$ and $\hat{\mbf x} \in \bb C^{p+1}$ such that
    \begin{center}
    \begin{subequations}
    \parbox[l]{\textwidth*10/30}{
        \begin{equation}
            \label{eq:GeneralGsystem}
            \bmat{\mbf G & \mbf z} \mbf x = \mbf b
        \end{equation}
    }~\hspace{0.3in}
    \mbox{and}~\hspace{0.3in}
    \parbox[l]{\textwidth*10/30}{
        \begin{equation}
            \label{eq:GeneralUSystem}
            \bmat{\mbf U & \mbf z}\hat{\mbf x} = \mbf b.
        \end{equation}
    }
    \end{subequations}
    \end{center}
    Further, it must be that the last components of $\mbf x$ and $\hat{\mbf x}$ are equal and unique.
\end{lemma}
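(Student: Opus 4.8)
The plan is to read off existence directly from the two rank hypotheses and then exploit the single structural fact about the thin SVD, namely $\range(\mbf G) = \range(\mbf U)$, to move information between the two systems. First I would observe that the existence condition \cref{eq:existCondGeneral} says precisely $\mbf b \in \range\bmat{\mbf G & \mbf z}$, which yields a solution $\mbf x$ to \cref{eq:GeneralGsystem}. Since the columns of $\mbf U$ form an orthonormal basis for $\range(\mbf G)$, we have $\range\bmat{\mbf G & \mbf z} = \range\bmat{\mbf U & \mbf z}$, so $\mbf b$ also lies in the range of $\bmat{\mbf U & \mbf z}$, producing a solution $\hat{\mbf x}$ to \cref{eq:GeneralUSystem}.

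For uniqueness of the last coordinate, I would split $\mbf x = \bmat{\mbf x_G^\top & x_z}^\top$ with $\mbf x_G \in \bb C^{r}$ and argue that every null-space vector $\bmat{\mbf w^\top & w_z}^\top$ of $\bmat{\mbf G & \mbf z}$ must satisfy $w_z = 0$: otherwise $\mbf z = -w_z^{-1}\mbf G\mbf w \in \range(\mbf G)$, forcing $\rank\bmat{\mbf G & \mbf z} = \rank\mbf G$ and contradicting \cref{eq:uniqueCondGeneral}. Hence $x_z$ is identical for every solution of \cref{eq:GeneralGsystem}. Because $\rank\mbf U = p = \rank\mbf G$ and $\range\mbf U = \range\mbf G$ give $\rank\bmat{\mbf U & \mbf z} = \rank\bmat{\mbf G & \mbf z} = \rank\mbf U + 1$, the identical argument shows the last coordinate $\hat{x}_z$ of $\hat{\mbf x} = \bmat{\hat{\mbf x}_U^\top & \hat{x}_z}^\top$ is likewise uniquely determined.

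It remains to show the two last coordinates coincide, which I expect to be the main obstacle, as it is the only step that couples the two systems algebraically rather than merely through their ranges. I would subtract the two solved systems to obtain $\mbf G\mbf x_G - \mbf U\hat{\mbf x}_U + (x_z - \hat{x}_z)\mbf z = \mbf 0$, then substitute $\mbf G = \mbf U\Sigma\mbf V^*$ so that $\mbf G\mbf x_G = \mbf U(\Sigma\mbf V^*\mbf x_G)$ lies in the column span of $\mbf U$. This recasts the identity as the null-space relation $\bmat{\mbf U & \mbf z}\bmat{(\Sigma\mbf V^*\mbf x_G - \hat{\mbf x}_U)^\top & x_z - \hat{x}_z}^\top = \mbf 0$, and applying the last-coordinate-is-zero fact established above forces $x_z - \hat{x}_z = 0$. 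The only care needed here is bookkeeping of block dimensions: confirming that $\Sigma\mbf V^*\mbf x_G \in \bb C^{p}$ matches the length of $\hat{\mbf x}_U$, so that the combined vector is a genuine element of the null space of $\bmat{\mbf U & \mbf z}$ and the uniqueness argument applies verbatim.
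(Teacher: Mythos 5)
Your proposal is correct and rests on the same two pillars as the paper's proof: the substitution $\mbf G\mbf x_G = \mbf U(\Sigma\mbf V^*\mbf x_G)$ to pass between the two systems, and the fact that \cref{eq:uniqueCondGeneral} forces $\mbf z \notin \range(\mbf U) = \range(\mbf G)$, so null vectors of the augmented matrices have vanishing last coordinate. The only difference is organizational --- the paper constructs $\hat{\mbf x} = \bigl[ (\Sigma\mbf V^*\mbf x(1\!:\!r))^\top \;\; \mbf x_{r+1} \bigr]^\top$ explicitly, obtaining existence and equality of last components in one step, whereas you obtain existence abstractly via range equality and recover the coupling by differencing the two solved systems --- which is an equivalent rearrangement of the same argument.
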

\begin{proof}
    Existence of $\mbf x$ satisfying 
    \cref{eq:GeneralGsystem}
    follows from \cref{eq:existCondGeneral} and uniqueness of its last component, $\mbf x_{r+1}$, from \cref{eq:uniqueCondGeneral}. Then using \cref{eq:SVDofGeneralSys}, we rearrange \cref{eq:GeneralGsystem} as
    \begin{equation}
        \bmat{\mbf G & \mbf z}\mbf x =  \bmat{\mbf U\Sigma \mbf V^* & \mbf z}\mbf x = \bmat{\mbf U & \mbf z}\bmat{\mbf{\Sigma}\mbf V^*\mbf x(1\!:\!r)\\\mbf x_{r+1}} = \mbf b,
    \end{equation}
    where $\mbf x(1\!:\!r) \in \bb C^{r}$ denotes the first $r$ entries of $\mbf x$. Thus
    $\hat{\mbf x} = \sbmat{\mbf{\Sigma}\mbf V^*\mbf x(1:r)\\\mbf x_{r+1}} \in \bb C^{p+1}$
    is a solution to \cref{eq:GeneralUSystem} and satisfies $\hat{\mbf x}_{p+1} = \mbf x_{r+1}$. i.e., the last components of $\mbf x$ and $\hat{\mbf x}$ are equal.  Since the matrix $\mbf U$ is an orthonormal basis for the range of $\mbf G$ and (by \cref{eq:uniqueCondGeneral}) $\mbf z$ is not in the range of $\mbf G$, the matrix $\bmat{\mbf U & \mbf z}$
    is full rank, which means $\hat{\mbf x}$ is unique and in particular, $\hat{\mbf x}_{p+1}$ must be unique.
\end{proof}
\cref{prop:GeneralOrthSubspaceWorks} can be specialized to our setting of recovering frequency information $H(\sigma)$ from time-domain data $(\bb U,\bb Y)$ via \cref{cor:OrthSubspaceOK}. 
\begin{corollary}
    \label{cor:OrthSubspaceOK}
    Assume access to time-domain data $(\bb U,\bb Y)$ generated by an order $n$ system $\mathcal S$ as in \cref{eq:LinDiscSys} with transfer function $H(z)$.    Let $\mbf G_{n}, \mbf z(\sigma),$ and $\mbf b(\sigma)$ be constructed as in \cref{eq:GnDef}, \cref{eq:zSigmaDef}, and \cref{eq:bSigmaDef}, respectively.  Assume for $\sigma \in \bb C,$ the rank conditions \cref{eq:ExistCondOrig} and \cref{eq:UniqueCondOrig} are satisfied.  Let
    \begin{equation}
        \mbf U\Sigma \mbf V^* = \mbf G_{n},
    \end{equation}
    where $\mbf U \in \bb R^{2(n+1)\times p}, \Sigma \in \bb R^{p \times p},$ and $\mbf V^* \in \bb R^{p \times (T-n+1)}$ for $p \leq T- n + 1$ be the thin SVD of $\mbf G_{n}$.  Then there exists $\xi \in \bb C^{T-n}$ and $\hat{\xi} \in \bb C^{p}$ such that
    \begin{center}
    \begin{subequations}
    \parbox[l]{\textwidth*14/30}{
        \begin{equation}
            \label{eq:CalcM0InOrthSubProof}
            \bmat{\mbf G_{n} & \mbf z(\sigma)}\bmat{\xi \\ H(\sigma)} = \mbf b(\sigma)
        \end{equation}
    }
    \parbox[l]{\textwidth*14/30}{
        \begin{equation}
            \label{eq:CalcM0OrthInOrthSubProof}
            \bmat{\mbf U & \mbf z(\sigma)}\bmat{\hat{\xi} \\ H(\sigma)} = \mbf b(\sigma).
        \end{equation}
    }
    \end{subequations} 
    \end{center}
    Therefore, the last components of any solution to \cref{eq:CalcM0InOrthSubProof} and \cref{eq:CalcM0OrthInOrthSubProof}
    is $H(\sigma).$
\end{corollary}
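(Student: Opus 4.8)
The plan is to derive \cref{cor:OrthSubspaceOK} as a direct specialization of \cref{prop:GeneralOrthSubspaceWorks}, under the identification $\mbf G = \mbf G_n$, $\mbf z = \mbf z(\sigma)$, and $\mbf b = \mbf b(\sigma)$, and then to pin down the resulting unique last component as $H(\sigma)$ using that $(\bb U,\bb Y)$ is generated by the true system. First I would verify that the two rank hypotheses \cref{eq:existCondGeneral} and \cref{eq:uniqueCondGeneral} of the lemma hold here. Expanding the block structure from the definitions \cref{eq:GnDef}, \cref{eq:zSigmaDef}, and \cref{eq:bSigmaDef} gives $\bmat{\mbf G_n & \mbf z(\sigma)} = \sbmat{\bb H_n(\bb U) & \mbf 0 \\ \bb H_n(\bb Y) & -\gamma_n(\sigma)}$ and $\bmat{\mbf G_n & \mbf z(\sigma) & \mbf b(\sigma)} = \sbmat{\bb H_n(\bb U) & \mbf 0 & \gamma_n(\sigma) \\ \bb H_n(\bb Y) & -\gamma_n(\sigma) & \mbf 0}$. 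Since scaling a column by $-1$ is an invertible column operation, these have exactly the ranks of the matrices appearing in \cref{eq:ExistCondOrig} and \cref{eq:UniqueCondOrig}; hence \cref{eq:existCondGeneral} coincides with \cref{eq:ExistCondOrig} and \cref{eq:uniqueCondGeneral} with \cref{eq:UniqueCondOrig}. This step is pure bookkeeping, the only care needed being that the sign on $\gamma_n(\sigma)$ in $\mbf z(\sigma)$ and the reordering of the $\mbf z$ and $\mbf b$ blocks leave the rank unchanged.

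With the hypotheses confirmed, applying \cref{prop:GeneralOrthSubspaceWorks} with the thin SVD $\mbf U\Sigma\mbf V^* = \mbf G_n$ immediately produces vectors $\xi$ and $\hat\xi$ solving \cref{eq:CalcM0InOrthSubProof} and \cref{eq:CalcM0OrthInOrthSubProof}, and guarantees that their last components agree and are uniquely determined. It then remains only to identify this common scalar. I would observe that \cref{eq:CalcM0InOrthSubProof} is identical to \cref{eq:calcM0Orig} of \cref{thm:ExistUniqueOrig}, with the last unknown playing the role of $M_0$; since the rank conditions are assumed, that theorem furnishes a unique $M_0$ for which the data are informative for interpolation, i.e., $\Sigma^n_{(\bb U,\bb Y)} \subseteq \Sigma^{n,0}_{\sigma,M_0}$.

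Finally, because $(\bb U,\bb Y)$ is produced by $\mathcal S$, the true coefficient vector $\bmat{\mbf q & -\mbf p}$ satisfies \cref{eq:ExplainData} and so lies in $\Sigma^n_{(\bb U,\bb Y)}$; by the inclusion it also lies in $\Sigma^{n,0}_{\sigma,M_0}$, so $\mathcal S$ has zeroth moment $M_0$ at $\sigma$. Since the zeroth moment of $\mathcal S$ at $\sigma$ is by definition $H(\sigma)$, we conclude $M_0 = H(\sigma)$, and combining this with the lemma's uniqueness statement shows that the last component of any solution to either \cref{eq:CalcM0InOrthSubProof} or \cref{eq:CalcM0OrthInOrthSubProof} equals $H(\sigma)$. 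I expect the main (if modest) obstacle to be precisely this final identification: the lemma yields only an abstract unique scalar, and the real content lies in tying it to the transfer function value by tracing the generating system's coefficients through the explaining set and the informativity inclusion, rather than in the linear algebra itself.
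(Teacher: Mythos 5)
Your proposal is correct and follows essentially the same route as the paper, which proves \cref{cor:OrthSubspaceOK} simply by noting it follows from \cref{prop:GeneralOrthSubspaceWorks}. The details you supply---that a column sign flip and reordering leave the ranks in \cref{eq:ExistCondOrig} and \cref{eq:UniqueCondOrig} unchanged, and that the unique last component equals $H(\sigma)$ because the true coefficient vector $\bmat{\mbf q & -\mbf p}$ lies in $\Sigma^n_{(\bb U,\bb Y)}$ and satisfies \cref{eq:0MomentTrue} with $M_0 = H(\sigma)$---are exactly the bookkeeping the paper leaves implicit via \cref{thm:ExistUniqueOrig}.
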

The proof of \cref{cor:OrthSubspaceOK} follows from \cref{prop:GeneralOrthSubspaceWorks}.  \cref{prop:GeneralOrthSubspaceWorks} leads to one more corollary, which allows us to also rewrite \cref{thm:ExistUniqueOrig} using an orthogonal basis.
\begin{corollary}
    \label{cor:ExistUniqueCondOrth}
    Let $\mbf U = \orth(\mbf G_{n})$
    be an orthonormal basis for the range of $\mbf G_{n}$. 
    Then the data $(\bb U$,$\bb Y)$ are informative for interpolation at $\sigma$ if and only if 
    \begin{subequations}
    \begin{equation}
        \label{eq:ExistCondOrth}
        \rank\left(\bmat{\mbf U & \mbf z(\sigma) & \mbf b(\sigma)}\right) = \rank\left(\bmat{\mbf U & \mbf z(\sigma)}\right),~\mbox{and}
            \end{equation}
                \begin{equation}
        \label{eq:UniqueCondOrth}
        \rank\left(\bmat{\mbf U & \mbf z(\sigma)}\right) = \rank\left({\mbf U}\right) + 1.
    \end{equation}
    \end{subequations}
\end{corollary}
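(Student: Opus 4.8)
The plan is to reduce Corollary~\ref{cor:ExistUniqueCondOrth} to \cref{thm:ExistUniqueOrig} by two elementary observations: that the defining block matrices differ only by column sign flips (which preserve rank), and that replacing $\mbf G_n$ by an orthonormal basis $\mbf U$ for its range leaves every relevant column space—and hence every relevant rank—unchanged. Since \cref{thm:ExistUniqueOrig} already establishes that informativity for interpolation at $\sigma$ is equivalent to conditions~\cref{eq:ExistCondOrig} and \cref{eq:UniqueCondOrig}, the task is purely to rewrite those two conditions in the $\mbf U$, $\mbf z(\sigma)$, $\mbf b(\sigma)$ notation.

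First I would translate the block matrices. Using the definitions \cref{eq:GnDef}, \cref{eq:zSigmaDef}, and \cref{eq:bSigmaDef}, one checks directly that
\begin{equation}
  \bmat{\mbf G_n & \mbf z(\sigma)} = \bmat{\bb H_n(\bb U) & \mbf 0\\ \bb H_n(\bb Y) & -\gamma_n(\sigma)}
  \quad\text{and}\quad
  \bmat{\mbf G_n & \mbf z(\sigma) & \mbf b(\sigma)} = \bmat{\bb H_n(\bb U) & \mbf 0 & \gamma_n(\sigma)\\ \bb H_n(\bb Y) & -\gamma_n(\sigma) & \mbf 0}.
\end{equation}
These are exactly the matrices appearing in \cref{eq:UniqueCondOrig} and \cref{eq:ExistCondOrig}, except that the $\gamma_n(\sigma)$ block carries a sign of $-1$. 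Since multiplying a single column by $-1$ is an invertible column operation, it does not change the rank of a matrix. Hence conditions \cref{eq:ExistCondOrig} and \cref{eq:UniqueCondOrig} are equivalent, respectively, to
\begin{equation}
  \rank\bmat{\mbf G_n & \mbf z(\sigma) & \mbf b(\sigma)} = \rank\bmat{\mbf G_n & \mbf z(\sigma)}
  \quad\text{and}\quad
  \rank\bmat{\mbf G_n & \mbf z(\sigma)} = \rank(\mbf G_n) + 1.
\end{equation}

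Next I would pass from $\mbf G_n$ to $\mbf U$. By hypothesis $\mbf U = \orth(\mbf G_n)$ is an orthonormal basis for $\Range(\mbf G_n)$, so $\Range(\mbf U) = \Range(\mbf G_n)$ and in particular $\rank(\mbf U) = \rank(\mbf G_n)$. For any matrices $\mbf C$, the column space of a horizontal concatenation satisfies $\Range\bmat{\mbf A & \mbf C} = \Range(\mbf A) + \Range(\mbf C)$; therefore whenever $\Range(\mbf A) = \Range(\mbf B)$ we have $\Range\bmat{\mbf A & \mbf C} = \Range\bmat{\mbf B & \mbf C}$, and the two concatenations share the same rank. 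Applying this with $\mbf A = \mbf G_n$, $\mbf B = \mbf U$, and $\mbf C = \mbf z(\sigma)$ (respectively $\mbf C = \bmat{\mbf z(\sigma) & \mbf b(\sigma)}$) yields
\begin{equation}
  \rank\bmat{\mbf G_n & \mbf z(\sigma)} = \rank\bmat{\mbf U & \mbf z(\sigma)}
  \quad\text{and}\quad
  \rank\bmat{\mbf G_n & \mbf z(\sigma) & \mbf b(\sigma)} = \rank\bmat{\mbf U & \mbf z(\sigma) & \mbf b(\sigma)}.
\end{equation}
Substituting these equalities, together with $\rank(\mbf G_n) = \rank(\mbf U)$, into the two displayed conditions produces exactly \cref{eq:ExistCondOrth} and \cref{eq:UniqueCondOrth}, which completes the equivalence.

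This argument is almost entirely bookkeeping, so there is no deep obstacle; the only point demanding care is the block-matrix translation in the first step, where one must correctly match the columns of \cref{eq:ExistCondOrig}--\cref{eq:UniqueCondOrig} to the concatenations $\bmat{\mbf G_n & \mbf z(\sigma)}$ and $\bmat{\mbf G_n & \mbf z(\sigma) & \mbf b(\sigma)}$ and verify that the discrepancy is exactly a sign on the $\gamma_n(\sigma)$ column. Once that identification is made, the range-preservation lemma for column concatenation does all the remaining work.
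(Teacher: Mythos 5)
Your proof is correct and takes essentially the same route the paper intends: the paper states \cref{cor:ExistUniqueCondOrth} without a separate proof, treating it as an immediate consequence of \cref{thm:ExistUniqueOrig} together with the fact that $\Range(\mbf U) = \Range(\mbf G_n)$ leaves every rank in \cref{eq:ExistCondOrig} and \cref{eq:UniqueCondOrig} unchanged. Your explicit verification of the sign flip on the $\gamma_n(\sigma)$ column and the range-of-concatenation identity $\Range\bmat{\mbf A & \mbf C} = \Range(\mbf A) + \Range(\mbf C)$ simply fills in the bookkeeping the paper leaves implicit.
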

\cref{cor:ExistUniqueCondOrth} will be further exploited in \cref{sec:RankCondsAsMatVecs}, where we show how the conditions \cref{eq:ExistCondOrth} and \cref{eq:UniqueCondOrth} may be checked quickly and robustly.

We emphasize that the implication of \cref{cor:OrthSubspaceOK} is much different than simply using the SVD of the coefficient matrix to solve a linear system. In other words, \cref{cor:OrthSubspaceOK} is \emph{not} suggesting to simply use the SVD of $\bmat{\mbf G_{n} & \mbf z(\sigma)}$. To show the distinction more clearly, let
$\bmat{\mbf G_{n} & \mbf z(\sigma)} = \hat{\mbf U}\hat{\Sigma}\hat{\mbf V}^*$, {where}
        $\hat{\mbf U} \in \bb C^{2(n+1)\times (p+1)}, \hat{\Sigma} \in \bb C^{(p+1)\times (p+1)},$ and $\hat{\mbf V} \in \bb C^{(T-n +1) \times (p+1)}$
be the thin SVD of $\bmat{\mbf G_{n} & \mbf z(\sigma)}$.  Then using this SVD, one can solve \cref{eq:calcM0Orig} by
$
\mbf x 
=
\hat{\mbf V} \hat{\Sigma}^{-1} \hat{\mbf U}^*
\mbf b(\sigma)$ and recover the zeroth moment by setting $M_0 = {\mbf x}_{p+1}$, i.e., the last entry of $\mbf x$.
However, this method of solving for $M_0$ does not improve the conditioning  as the ill-conditioning in $\bmat{\mbf G_{n} & \mbf z(\sigma)}$
is still present  as ill conditioning in $\hat{\Sigma}$. 
In contrast, exploiting the fact that all we need is the last component of $\mbf x$, not the whole solution,  \cref{cor:OrthSubspaceOK} does \emph{not} require us to keep the singular values $\hat{\Sigma}$ of $\mbf G_n$ at all. This allows us to work with a much better conditioned linear system~\eqref{eq:CalcM0OrthInOrthSubProof} as opposed to~\eqref{eq:CalcM0InOrthSubProof}. {In other words, the ill-conditioning of $\mbf G_n$ encoded in  $\hat{\Sigma}$ is  avoided.}
%For more general references on linear systems and conditioning of linear systems, see, e.g., \cite{Demmel1997book,GolubVanLoan2013MatrixComp,Higham2002AccuracyBook,TrefethenB1997Book} and the references therein.

\begin{figure}
    \centering
    \includegraphics[trim = {0 0 0 .5cm},clip,width = .8\textwidth]{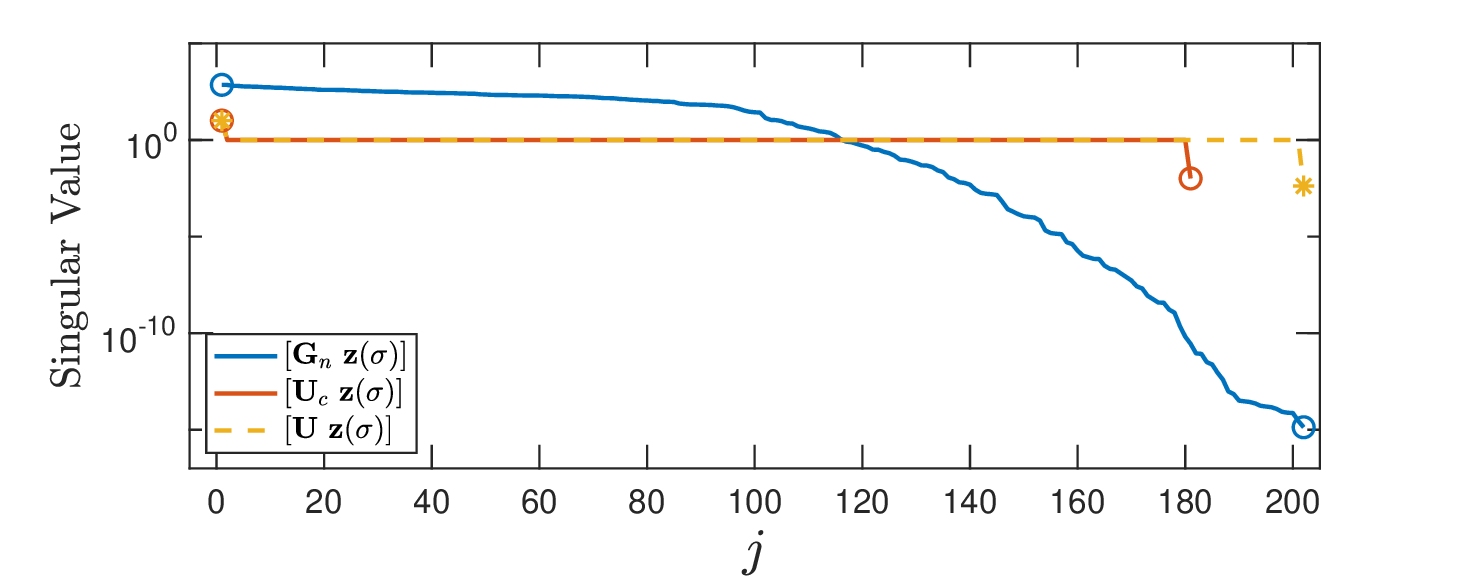}
    \caption{Singular values of
    $\bmat{\mbf G_{n} & \mbf z(\sigma)}$, $\bmat{\mbf U_c & \mbf z(\sigma)}$, and $\bmat{\mbf U & \mbf z(\sigma)}$.}
    \label{fig:SingularValueCompare}
\end{figure}

Not surprisingly, we anticipate the matrix $\bmat{\mbf U & \mbf z(\sigma)}$ to be much better conditioned than $\bmat{\mbf G_n & \mbf z(\sigma)}$. To illustrate this point numerically, we construct a discrete-time, stable linear dynamical system (as in $\cref{eq:LinDiscSys}$) denoted by $\mathcal S_0$ with transfer function $H_0(z)$. The system $\mc S_0$ has order $n = 100$ with poles randomly placed in the unit disc and with random residues.  We simulate $\mathcal S_0$ for $T = 3n = 300$ time steps with a Gaussian random input $\bb U \in \bb R^{T+1}$ to obtain the corresponding output $\bb Y \in \bb R^{T+1}$ and form $\mbf G_n$ as in \cref{eq:GnDef} and $\mbf U = \orth(\mbf G_n)$. 

\cref{cor:OrthSubspaceOK} assumes that $\mbf U$ exactly spans the range of ${\mbf G}_n$, i.e., no columns corresponding to non-zero singular values are truncated. But, in practice,  the number of columns of $\mbf U$ will be determined by a decision about the rank of $\mbf G_n$. Therefore, in addition to computing $\mbf U$ with the same number of columns as ${\mbf G}_n$, which is \emph{theoretically} a full-rank matrix in this example, we also compute a truncated SVD (and thus an approximate basis for the range of ${\mathbf G}_n$) where the \emph{numerical} rank of ${\mathbf G}_n$ is computed based on a given tolerance (we used the \MATLAB's numerical rank decision in the 
built-in {\tt rank} command). 
We call the resulting \emph{computed} basis ${\mbf U}_{c}$. In this example,
while $\mbf G$ and $\mbf U$ have $201$ columns, ${\mbf U}_{c}$ has 180 columns.
We pick $\sigma = e^{0.5\mbf i}$ {where $\mbf i^2 = -1$}  and plot  the singular values of the coefficient matrices $\bmat{\mbf G_n & \mbf z(\sigma)}$,  $\bmat{\mbf U & \mbf z(\sigma)}$, and $\bmat{{\mbf U}_c & \mbf z(\sigma)}$
in \cref{fig:SingularValueCompare}.   
As expected, the condition numbers of $\bmat{\mbf U & \mbf z(\sigma)}$ and $\bmat{\mbf U_c & \mbf z(\sigma)}$ are much smaller than that of $\bmat{\mbf G_{n} & \mbf z(\sigma)}$. More precisely, we have $\kappa_2(\bmat{\mbf U & \mbf z(\sigma)}) = 3.75\times 10^3$, $\kappa_2(\bmat{\mbf U_c & \mbf z(\sigma)}) = 9.93\times 10^2$, and $\kappa_2(\bmat{\mbf G_{n} & \mbf z(\sigma)}) = 4.58\times 10^{17}$
where $\kappa_2(\cdot)$ denotes  the $2$-norm condition number. 
\begin{remark}
    The condition number of the matrix $\bmat{\mbf G_n & \mbf z(\sigma)}$,  $\kappa_2(\bmat{\mbf G_n & \mbf z(\sigma)})$ 
    is a key quantity in the perturbation analysis of the linear system  $\bmat{\mbf G_n & \mbf z(\sigma)}\mbf x = \mbf b$ and provides an upper bound for the relative error in the solution for the perturbed system. However, this is a worst-case scenario upper bound and for some right-hand sides $\mbf b$, the upper bound predicted by the condition number might be pessimistic. Indeed, for the specific problem in the above numerical example, this is the case and the value of $M_0$ obtained from solving the original problem \cref{eq:calcM0Orig} explicitly still provides an accurate approximation to $H(\sigma)$. However, the advantage of using $\mbf U$ (or $\mbf U_c$) is that the problem to be solved  will be well conditioned for \emph{any} right-hand side $\mbf b$, which is a much more desirable scenario to be in since we would like to estimate $H(\sigma)$ for many different values of $\sigma$. We will present additional advantages next.
\end{remark}

Recall that the data-informativity framework we consider here requires checking the rank conditions \cref{eq:ExistCondOrig} and \cref{eq:UniqueCondOrig}.  The rank calculation is an ambiguous task in general, especially when there is not a clear cut-off/gap in the singular values. This is precisely the case for the singular values of $\bmat{\mbf G_{n} & \mbf z(\sigma)}$, which decay gradually to machine precision; making the rank decision difficult. On the other hand, we see that finding the rank of $\bmat{\mbf U & \mbf z(\sigma)}$ and $\bmat{\mbf U_c & \mbf z(\sigma)}$ are much less ambiguous. We revisit these issues in more detail in the next section.

In data-driven reduced-order modeling methods that use frequency-domain data, one would require access to
many frequency samples $H(\sigma_i), i=1,2,\ldots,M$, where $M$ could  be potentially large. Thus, one will have to solve the linear system~\cref{eq:CalcM0OrthInOrthSubProof} multiple times for different $\mbf z(\sigma)$ and $\mbf b(\sigma)$ values. However, the orthogonal basis $\mbf U = \orth(\mbf G_n)$ in~\cref{eq:CalcM0OrthInOrthSubProof} is \emph{fixed} (since it only depends on the input/output pairs not the frequency $\sigma$) and can be precomputed. Then solving ~\cref{eq:CalcM0OrthInOrthSubProof} can be achieved with only, e.g., one step of orthogonalization (since $\mbf U$ already has orthonormal columns) for different frequencies. Thus, working with a precomputed  $\mbf U$ 
also drastically reduces the computational cost.

\subsubsection{Revisiting rank calculations}
\label{sec:RankCondsAsMatVecs}
As we pointed out earlier, in a computational setting we will be working with the computed basis ${\mbf U}_c$. Thus, added to the fact that numerical rank decisions are ambiguous in general,
we should not expect to be able to exactly satisfy (or check) the rank conditions \cref{eq:ExistCondOrth}
and \cref{eq:UniqueCondOrth} in practice (which are equivalent to the original rank conditions \cref{eq:ExistCondOrig} and \cref{eq:UniqueCondOrig} by \cref{cor:ExistUniqueCondOrth}). Thus, 
we outline a procedure for checking both \cref{eq:ExistCondOrth} and \cref{eq:UniqueCondOrth} to a user-defined tolerance in this section. 
{Even though replacing  $\bmat{\mbf G_{n} & \mbf z(\sigma)}$ with  $\bmat{\mbf U_c & \mbf z(\sigma)}$ has made the rank decisions clear 
for the example above as shown in \cref{fig:SingularValueCompare}
(and potentially in most cases), we still want to revisit these rank decisions to increase robustness for all cases.}
%since there might still be some cases where the situation is still not as clear.}
We present the  analysis for the true basis $\mbf U$.

Since  $\mbf U$ has orthonormal columns (and full column rank),  condition \cref{eq:UniqueCondOrth} is satisfied if and only if
$\mbf z(\sigma) \not\in \range(\mbf U).$
This is equivalent to
\begin{equation}
    \label{eq:UniqueCondMatVecExact}
    \|(\mbf I - \mbf U\mbf U^{H})\mbf z(\sigma)\| > 0.
\end{equation}
Due to the machine-precision arithmetic (and the truncated subspace computations), we expect that \cref{eq:UniqueCondMatVecExact} will always be satisfied. Instead, we pick a tolerance $\tau_1$ and say that $\mbf z(\sigma) \not\in \range(\mbf U)$ if
\begin{equation}
    \label{eq:UniqueCondMatVec}
    \|(\mbf I - \mbf U\mbf U^{H})\mbf z(\sigma)\| \geq \tau_1\|\mbf z(\sigma)\|.
\end{equation}
The condition \cref{eq:UniqueCondMatVec} is easy to check and only involves matrix-vector multiplications (and norm computation). We understand that the new condition has its own ambiguity as it requires choosing a tolerance $\tau_1$ to decide if a quantity is small enough or not.
However, the ambiguity in \cref{eq:UniqueCondMatVec} is easier to quantify than deciding if the rank of ${\mbf G}_n$ increases by one or not when appended by $\mbf z(\sigma)$, especially when the singular value decay is as shown in  \cref{fig:SingularValueCompare}.

We now express \cref{eq:ExistCondOrth} in a similar fashion.
Note that \cref{eq:ExistCondOrth} holds if and only if
\begin{equation}
    \label{eq:ExistCondRange0}
    \mbf b(\sigma) \in \range\bmat{\mbf U & \mbf z(\sigma)}.
\end{equation}
To examine \cref{eq:ExistCondRange0}, we first define the vectors
$$\mbf v = (\mbf I - \mbf U\mbf U^H)\mbf z(\sigma) \quad \text{and} \quad \mbf b^{\perp} = (\mbf I - \mbf U\mbf U^H)\mbf b(\sigma).$$
Now \cref{eq:ExistCondRange0} holds if and only if
\begin{equation}
    \label{eq:ExistCondRange}
    \mbf b^{\perp} \in \range(\mbf v).
\end{equation}
Note that in exact arithmetic \cref{eq:ExistCondRange} also covers the case where $\mbf b(\sigma) \in \range(\mbf U)$ {since then $\mbf b^{\perp} = 0$}.  Then \cref{eq:ExistCondRange} can be expressed as
\begin{equation}
    \label{eq:ExistCondMatVecExact}
    \left\|\left(\mbf I - \frac{\mbf v\mbf v^H}{\|\mbf v\|^2}\right)\mbf b^{\perp}\right\| = 0.
\end{equation}
Then, as before, we introduce a second tolerance $\tau_2$ and consider \cref{eq:ExistCondMatVecExact} satisfied if
\begin{equation}
    \label{eq:ExistCondMatVec}
    \left\|\left(\mbf I - \frac{\mbf v\mbf v^H}{\|\mbf v\|^2}\right)\mbf b^{\perp}\right\| \leq \tau_2\|\mbf b(\sigma)\|.
\end{equation}
The main cost of \cref{eq:ExistCondMatVec} is to compute matrix-vector products.
The user is able to set the parameters $\tau_1$ and $\tau_2$ to determine if the data is informative for interpolation.
We find in our numerical examples that using $\tau_1 = \tau_2 = 10^{-10}$ works effectively. However, by no means these are  optimal choices and the user can vary these tolerances.

We note that when $\tau_1 = 0$ and $\tau_2 = 0,$ \cref{eq:ExistCondMatVec} and \cref{eq:UniqueCondMatVec} are theoretically equivalent to  \cref{eq:ExistCondOrig} and \cref{eq:UniqueCondOrig}.  {Whenever $\tau_1 > 0$ in \cref{eq:UniqueCondMatVec}, we are actually enforcing a stricter requirement than \cref{eq:UniqueCondOrig}.  That is, any vector $\mbf z(\sigma)$ that satisfies \cref{eq:UniqueCondMatVec} will also satisfy \cref{eq:UniqueCondMatVecExact}, which is equivalent to \cref{eq:UniqueCondOrig}.}
Whenever $\tau_2 > 0$,
{as opposed to the linear system~\cref{eq:CalcM0OrthInOrthSubProof},}
we are solving the \emph{least squares problem}
\begin{equation}
    \label{eq:LSSolution}
    \argmin_{\mbf x \in \bb R^{p+1}}
    \left\|\begin{bmatrix}
        \mbf U
        & \mbf z(\sigma)
    \end{bmatrix}
    \mbf x
    -
    \mbf b(\sigma)
    \right\|; \quad \mbox{and~then~setting}~M_{0} = \mbf x_{p+1}.
\end{equation}
Allowing recovery of $M_0$ via a least squares problem is natural in this setting, especially if the input/output data were to be obtained experimentally.  As already mentioned, we will be using a computed, and potentially truncated, basis ${\mbf U}_c$ in place of $\mbf U$ that \cref{cor:OrthSubspaceOK} requires. {Moreover, for the computed basis ${\mbf U}_c$ there are likely some $\sigma \in \bb C$ for which the strict condition \cref{eq:ExistCondMatVecExact} will fail (even in exact arithmetic), but for relatively low tolerances $\tau_2$, the condition \cref{eq:ExistCondMatVec} will pass.}  Thus, in either case, we expect the computed $M_0$ via the least squares problem will be an approximation to $H(\sigma)$. Therefore, in the next section, we provide a method to estimate this error in $M_0$ recovered via \cref{eq:LSSolution} without requiring the knowledge of the true value $H(\sigma)$.

\subsection{Windowing and error estimation}
\label{sec:windowing}

As we already discussed and motivated above, in practice we expect $M_0$ to only approximate $H(\sigma)$. Therefore, given the input/output data pair $(\bb U, \bb Y)$, obtaining multiple estimates of $M_0$, which then could be used for error estimation will be crucial. We will achieve this goal via a windowing strategy.

Choose  $t < T$ and break the data into 
$T-t+1$ windows of length $t+1$:
{
\begin{equation}
    \bb U_k = \left[ \begin{array}{ccc} u[k] & \ldots & u[{k+t}]\end{array}\right]  \in \bb R^{t+1},  \quad \bb Y_k = \left[ \begin{array}{ccc}y[k] & \ldots & y[{k+t}]\end{array}\right] \in \bb R^{t+1}
\end{equation}
for $k = 0,1,\ldots,T-t$.}  We  now define
\begin{equation}
    \label{eq:GnkDef}
    \mbf G_{k,n} \coloneqq \begin{bmatrix}\bb H_{n}(\bb U_k)\\\bb H_{n}(\bb Y_k)\end{bmatrix} \in \mathbb C^{2(n+1) \times (t-n+1)}
\end{equation}
to be the matrix \cref{eq:GnDef}, but constructed using only the data in $\bb U_k$ and $\bb Y_k$, i.e., from the $k$-th window.  For a given $\sigma \in \bb C$, this will allow us to calculate multiple estimates $M_{0,k}$ for $H(\sigma)$, which will then provide us with an estimation for the error in the recovery of $M_0$ as we explain next.

We first choose a subset consisting of $K \ll T-t+1$ data windows of length $t+1$:
\begin{equation}
    \label{eq:WindowSubsetIntroduce}
    I_K = \{(\bb U_{k_1},\bb Y_{k_1}),\ldots,(\bb U_{k_K},\bb Y_{k_K})\} \subset \{(\bb U_{1},\bb Y_{1}),\ldots,(\bb U_{T-t+1},\bb Y_{T-t+1})\}.
\end{equation}
We choose a subset of all available $T-t+1$ windows because solving the least squares problems as in \cref{eq:LSSolution} for the full set of windows could be costly and (as will be shown later) is not needed.  Guidance on how to choose $K$ will be given later in the section. However, if desired, one can simply choose $K=T-t+1$.

Now, for each of the $K$ windows, we first find an orthonormal basis 
$\mbf U_{k_i} \in \bb R^{(2n+1)\times p_i}$
for the range of $\mbf G_{k_i,n}$ and then for each $k_i,i=1,2,\ldots,K$, {check the (rank) conditions \cref{eq:ExistCondMatVec} and \cref{eq:UniqueCondMatVec}.  Then assuming the these conditions hold, we }solve the least-squares problem
\begin{equation}
    \label{eq:LSSolution_window}
    \argmin_{\mbf x \in \bb R^{p_i+1}}
    \left\|\begin{bmatrix}
        \mbf U_{k_i}
        & \mbf z(\sigma)
    \end{bmatrix}
    \mbf x
    -
    \mbf b(\sigma)
    \right\| \quad \mbox{and~set~}\quad M_{0,k_i} \coloneqq \mbf x_{p_i+1}
\end{equation}
to obtain 
$\{M_{0,k_i}\}_{i=1}^{K}$, a total of $K$ estimates of $H(\sigma)$. As mentioned in \cite{burohmanBSC2020informativity}, the initial condition of the state  does not effect the recovered transfer function value.  This is crucial, since for each window except possibly for $k=0$, the input-output data $(\bb U_k, \bb Y_k)$ was obtained from the system with a non-zero initial state.

For the best accuracy, a good heuristic is to keep only the values of $M_{0,k_i}$ that correspond to the smallest residuals of the least-squares problem~\cref{eq:LSSolution_window} for $k_i=1,2,\ldots,K$. In our implementation, we achieve this by choosing $I_W$, a subset of $I_K$ consisting of $W \leq K$ windows:
$$I_W = \{(\bb U_{\ell_1},\bb Y_{\ell_1}),(\bb U_{\ell_2},\bb Y_{\ell_2}),\ldots,(\bb U_{\ell_W},\bb Y_{\ell_W})\} \subseteq I_K,$$
where the elements of $I_W$ are chosen as the windows that have lowest least-squares residuals in~\cref{eq:LSSolution_window}.
We then define our final estimate $M_0$ to $H(\sigma)$ to be the mean of our $W$ estimates, i.e.,
\begin{equation}
    \label{eq:M0isAverage}
    M_0 \coloneqq \frac{1}{W}\sum_{i = 1}^{W} M_{0,\ell_i}.
\end{equation}
Given the estimates $\{M_{0,\ell_i}\}_{i=1}^{W}$, let $s$ denote their sample standard deviation and normalized sample standard deviation $s_W$, namely
\begin{center}
    \begin{subequations}
    \parbox[l]{\textwidth*16/30}{
        \begin{equation}
            s = \sqrt{\frac{\sum_{i=1}^{W}\left|M_{0,\ell_i}-M_0\right|^2}{W-1}},~
        \end{equation}
    }
    \parbox[l]{\textwidth*12/30}{
        \begin{equation}
            \label{eq:sw}
            {s_W = \left|\frac{s}{{M_0}}\right|}.
        \end{equation}
    }
    \end{subequations} 
\end{center}
assuming $M_0 \neq 0$ (the $M_0=0$ case can be handled without normalization). Let 
\begin{equation}
    \label{eq:relErrInWindowing}
    \epsilon_{rel} = \left|\frac{H(\sigma) - M_0}{H(\sigma)}\right |
\end{equation}
denote the relative error in the recovered value of $H(\sigma)$.
In our numerical examples, we have found  $s_W$ in~\cref{eq:sw} to be a consistently good indicator for 
the true relative error $\epsilon_{rel}$ in~\cref{eq:relErrInWindowing}.  Before we provide a justification for this claim, we illustrate it via a numerical example.

We consider a system $\mathcal S_1$ as in \cref{eq:LinDiscSys} with transfer function $H_1(z)$.  The system $\mc S_1$ has 100 random poles in the unit disc and 100 random residues. We investigate recovering 
$H_1(e^{\mbf i\omega_i}),\, i = 1,2,\ldots,100$
using \cref{eq:M0isAverage}
where $\omega_i$ are logarithmically spaced in $[10^{-3},\pi)$.  The data was collected by simulating $S_1$ for $T = 1,\!000$ time steps with a Gaussian random input $\bb U \in \bb R^{T+1}$ to obtain the corresponding output $\bb Y \in \bb R^{T+1}$.  We calculate $M_{0,k_i}$ via \cref{eq:LSSolution_window} for $K = 20$ windows of length $t = 3n$ (out of $T-t+1 = 701$ possible windows) and take only the $W = 10$ estimates with lowest residual to use in calculating $M_0$ via \cref{eq:M0isAverage}.

\begin{figure}[!htb]
    \centering
    \includegraphics[width = .8\textwidth]{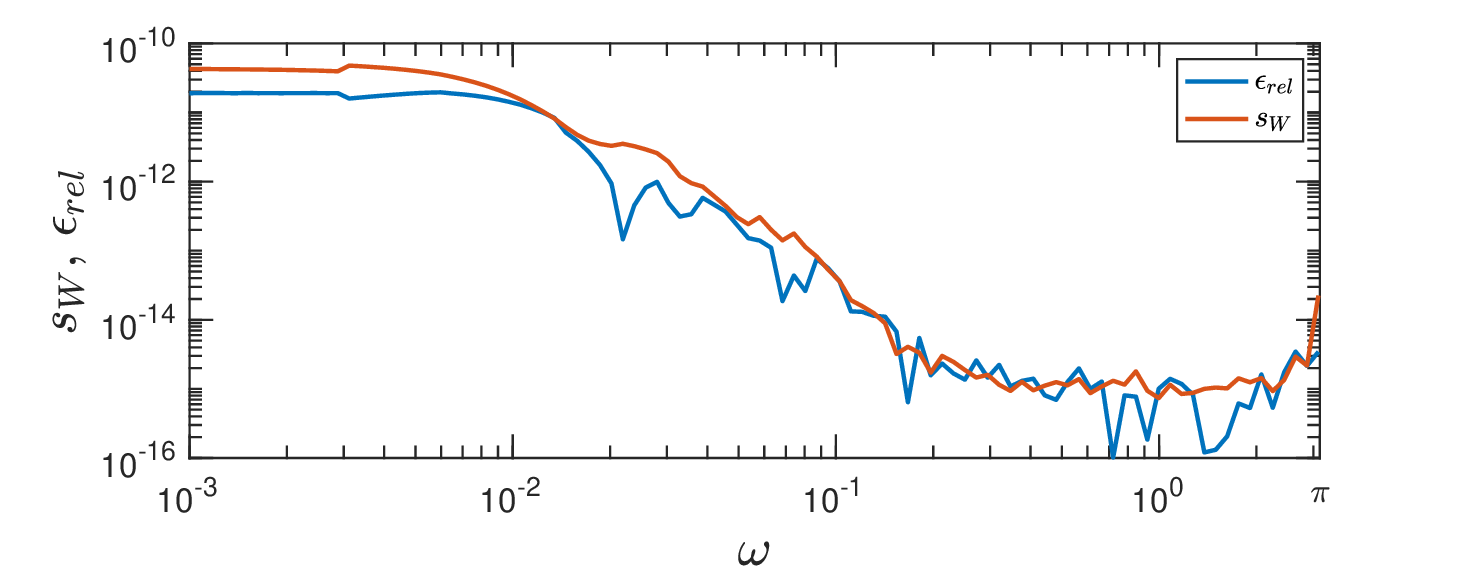}
    \caption{The normalized standard deviation ($s_W$) of $\{M_{0,\ell_i}(e^{\mbf i\omega})\}_{i=1}^{W}$ provides a good indicator for the relative error ($\epsilon_{rel})$ of $M_0(e^{\mbf i\omega})$ to $H_1(e^{\mbf i\omega})$.}
    \label{fig:NstdShowsErr}
\end{figure}

In \cref{fig:NstdShowsErr}, we plot the normalized sample standard deviation $s_W$ and the relative error $\epsilon_{rel}$ in recovery of approximations $M_0$ to $H_1(e^{\mbf i \omega})$.
The figure shows that $s_W$   typically well approximates the relative error $\epsilon_{rel}$ in the recovered transfer function value \cref{eq:relErrInWindowing}. 
We have observed this behavior consistently throughout our numerical examples.
This becomes an important tool since we now have an approximation of the relative error in recovered transfer function value $M_0(\sigma)$ that does not require the knowledge of $H(\sigma)$.  We also see that using a small fraction of all available windows does a good job of producing quality approximations to $H(\sigma)$.

We examine the dependence of the accuracy of the recovered frequency information and error indicator on the number of windows $K$ by finding approximations $M_0(\sigma)$ to $H_1(\sigma)$ 
with various number of windows $K$ for the same $\sigma \in \bb C$ using the same time-domain input-output data $\bb U \in \bb R^{T+1}$ and $\bb Y \in \bb R^{T+1}$ with $T = 1,\!000$.  We keep the size of the subset of windows constant at $W = 10$ while varying the total number of windows $K$.

\begin{figure}[!htb]
    \centering
    \includegraphics[trim={0 2cm 0 2cm},clip,width = .75\textwidth]{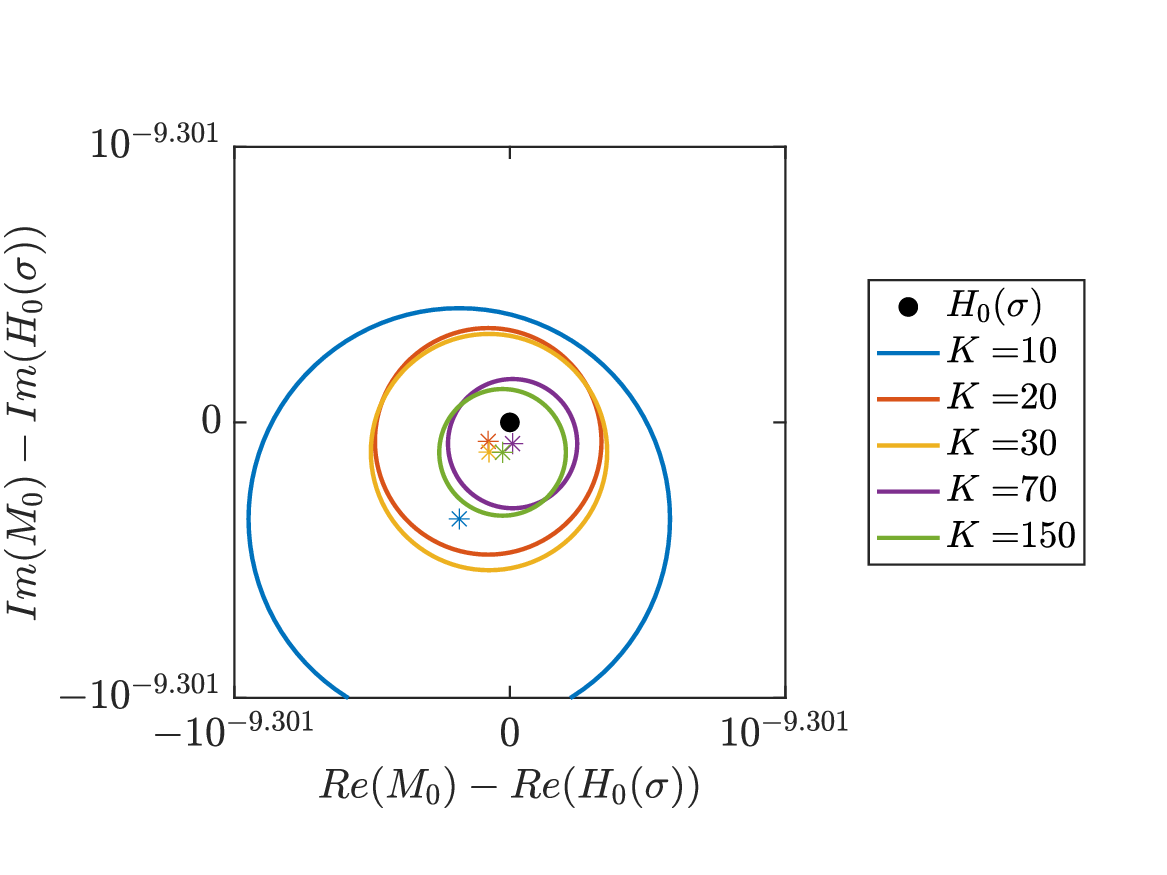}
    \caption{Error in $M_0(\sigma)$ (stars) and the boundary of the set of points one standard deviation from $M_0(\sigma)-H_1(\sigma)$ (solid circles) for different values of $K$.}
    \label{fig:OnlyNeedNW=10}
\end{figure}

\cref{fig:OnlyNeedNW=10} shows the results of recovering $H_1(\sigma)$ for $\sigma = e^{10^{-2} \mbf i}$ using the windowing framework via \cref{eq:LSSolution_window} and \cref{eq:M0isAverage} for $K = 10,20,30,70,150$, where the $W=10$ values used in \cref{eq:M0isAverage} are chosen as the values of $M_{0,\ell_i}$ corresponding to the lowest $W$ residuals of \cref{eq:LSSolution_window}.
In this figure, for each value of $K$, we  plot the absolute error in the recovered value of $M_0$ (the origin corresponds to exact recovery) and the boundary of the set $\{z \in \bb C \,|\, |z-M_0|\leq s_W$\}. 
We observe that the accuracy of the recovered transfer function value does not depend strongly on $K$.
For values of $K > W=10$, there is little difference in  the quality of the recovered $M_0$.  Thus, there is a benefit to taking a subset of all recovered $\{M_{0,k_i}\}_{i=1}^K$ values to use in calculation of $M_0$ via \cref{eq:M0isAverage} (i.e., we should have $K > W$), but it does not appear that there is a significant benefit to using large $K$.
Based on our numerical experiments (not just for this example but using all the tests we have run), we recommend choosing  $K \approx 20$ and $W \approx 10$ as a starting point, to be increased if needed. 

Finally, we observe that the set of points within one standard deviation of $M_{0}$ contains $H_1(\sigma)$ for each value of $K$. 
If the values of $M_{0,k_i}$ were to sampled from a normal distribution centered at $H(\sigma)$
(we cannot claim this is the case here),  approximately $68\%$ of the estimates would be expected to be within one standard deviation of $H(\sigma)$.
These observations at least help explain why our standard deviation error indicator $s_W$ in~\cref{eq:sw} does a good job of approximating the true relative error $\epsilon_{rel}$ 
in~\cref{eq:relErrInWindowing}.  If for a given set of approximants $\{M_{0,\ell_i}\}_{i=1}^{W}$ the true value of $H(\sigma)$ is within $s$ of $M_0$, then we have
$|H(\sigma)-M_0(\sigma)| < s.$
Thus, we obtain
$$ \epsilon_{rel} = \left |\frac{H(\sigma)-M_0(\sigma)}{H(\sigma)}\right | < \left|\frac{ s}{H(\sigma)}\right| \approx \left |\frac{ s}{M_0}\right| = s_W.$$
We emphasize that we are \emph{not} claiming $s_W$ to be a true upper bound for $\epsilon_{rel}$.
While this usually holds numerically, one can see in \cref{fig:NstdShowsErr} that for some values of $\omega$ the relative error is greater than the normalized standard deviation.
However, overall, $s_W$ proves to be a good error indicator.

\subsection{Flexibility in choosing system order}
\label{sec:calc_n_hat}

Recall that the theory presented in \cref{sec:DataInform} assumed that we knew the system order, $n$.  
This is an issue in practice if we wish to recover frequency information $H(\sigma)$ from only time-domain data $(\bb U,\bb Y)$ via \eqref{eq:calcM0Orig} without knowledge of the system. 
In \cref{prop:DontNeedFulln} below we show that one does not  need an exact knowledge of $n$ and can use any $\hat n \geq n$.

\begin{lemma}
    \label{prop:DontNeedFulln}
    Let $\mathcal S$ be an 
    order $n$ linear discrete-time SISO system with transfer function $H(z)$.
    Let $\hat n \geq n$ and $\sigma \in \bb C$ not a pole of $\mathcal S$.  If there exists $M_0 \in \bb C$ such that
    \begin{equation} \label{eq:nhatinform}\Sigma^{\hat n}_{\bb U,\bb Y} \subseteq \Sigma^{\hat n,0}_{\sigma,M_0},\end{equation}
    i.e., if the data is informative for interpolation at $\sigma$ using $\hat n \geq n$ in place of $n$, 
    then $M_0 = H(\sigma)$.
\end{lemma}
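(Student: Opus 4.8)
The plan is to exhibit a specific element of $\Sigma^{\hat n}_{\bb U,\bb Y}$ whose zeroth moment at $\sigma$ I can evaluate in closed form, and then let the hypothesized inclusion \cref{eq:nhatinform} pin $M_0$ to that value. The guiding idea is that even though $\mathcal S$ is minimal of order $n$, it possesses a non-minimal order-$\hat n$ input/output description: multiplying the numerator $Q(z)$ and denominator $P(z)$ of $H$ by a common degree-$(\hat n - n)$ factor leaves the transfer function unchanged while raising the apparent order to $\hat n$.

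First I would fix a monic polynomial $R(z)$ of degree $\hat n - n$ with $R(\sigma)\neq 0$ (e.g.\ $R(z)=(z-\alpha)^{\hat n-n}$ with $\alpha\neq\sigma$), and set $\hat P(z)=R(z)P(z)$ and $\hat Q(z)=R(z)Q(z)$; these are degree-$\hat n$ polynomials ($\hat P$ monic) whose ratio is still $H(z)$ and which define coefficient vectors $\hat{\mbf p},\hat{\mbf q}$ as in \cref{eq:pqvecsdef} with $n$ replaced by $\hat n$. Next I would confirm that $\bmat{\hat{\mbf q} & -\hat{\mbf p}}\in\Sigma^{\hat n}_{\bb U,\bb Y}$: the data is generated by $\mathcal S$ and hence satisfies the order-$n$ relation $P(z)y[t]=Q(z)u[t]$ of \cref{eq:diffEQShiftOp} for every admissible $t$; applying the shift polynomial $R(z)$ to both sides gives $\hat P(z)y[t]=\hat Q(z)u[t]$, which is exactly the order-$\hat n$ form of \cref{eq:ExplainData}. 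So this coefficient vector explains the data.

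Then, by the assumed inclusion \cref{eq:nhatinform}, the same vector lies in $\Sigma^{\hat n,0}_{\sigma,M_0}$ and therefore satisfies \cref{eq:0Moment} at the inflated order $\hat n$. Rearranging that identity precisely as in the passage from \cref{eq:TransFunIsRatioanlExpanded} to \cref{eq:0MomentTrue} reduces it to $\hat Q(\sigma)=M_0\,\hat P(\sigma)$. Because $\sigma$ is not a pole of $\mathcal S$ we have $P(\sigma)\neq 0$, and by construction $R(\sigma)\neq 0$, so $\hat P(\sigma)=R(\sigma)P(\sigma)\neq 0$; dividing gives $M_0=\hat Q(\sigma)/\hat P(\sigma)=Q(\sigma)/P(\sigma)=H(\sigma)$, as claimed.

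The one step I would handle with care — and the only place where anything can go wrong — is the requirement $R(\sigma)\neq 0$. This is what prevents the auxiliary factor from introducing a zero of $\hat P$ at $\sigma$ that would make the moment equation $\hat Q(\sigma)=M_0\hat P(\sigma)$ degenerate into $0=0$ and thus fail to determine $M_0$. Everything else is a routine transcription of \cref{eq:ExplainData} and \cref{eq:0Moment} at order $\hat n$, together with the elementary fact that clearing a common polynomial factor alters neither the transfer function nor the property of explaining the data.
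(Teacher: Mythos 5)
Your proof is correct and takes essentially the same route as the paper's: both exhibit a non-minimal order-$\hat n$ description of $\mathcal S$ by padding $H=Q/P$ with a common factor $(z-\alpha)^{\hat n-n}$, $\alpha\neq\sigma$, show this padded representation lies in $\Sigma^{\hat n}_{\bb U,\bb Y}$, and invoke the inclusion \cref{eq:nhatinform} to force $M_0=H(\sigma)$. Your two refinements—verifying membership by applying $R(z)$ directly to the difference equation \cref{eq:diffEQShiftOp} rather than appealing to equality of simulated outputs, and extracting $M_0$ from the explicit moment identity $\hat Q(\sigma)=M_0\hat P(\sigma)$ with $\hat P(\sigma)\neq 0$—simply make explicit what the paper argues at the system level, and correctly isolate $R(\sigma)\neq 0$ as the one essential requirement (the paper's extra condition $|\alpha|<1$ plays no role in the argument).
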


\begin{proof}
    Recall from \eqref{eq:TransFunIsRatioanlExpanded} that $H(z)= \frac{Q(z)}{P(z)}$ is an order $n$ rational function in $z$.
    Choose $\hat n \geq n$ and define the order $\hat n$ (non-minimal) rational function $\hat H(z)$ by choosing any $\alpha \neq \sigma \in \bb C$ with $|\alpha| < 1$ and setting
    $$\hat H(z) \coloneqq \frac{Q(z)}{P(z)}\frac{(z-\alpha)^{\hat n-n}}{(z-\alpha)^{\hat n-n}}.$$
    Note that for all $\mu \neq \alpha$, we have
    $\hat H(\mu) = H(\mu).$
    Since $\hat H(z)$ corresponds to a non-minimal order $\hat n$ system $\hat{\mathcal S}$, for any input $\bb U$, the time-domain outputs $\bb Y$ and $\hat{\bb Y}$ produced by $\mathcal S$ and $\hat{\mathcal S}$, respectively, will be equal.  So
    $\hat{\mathcal S} \in \Sigma^{\hat n}_{(\bb U,\bb Y)}.$
    Then since $\hat H(\sigma) = H(\sigma)$, if there exists $M_0 \in \bb C$ such that~\eqref{eq:nhatinform} holds,
    it must be that $M_0 = H(\sigma),$ since at least one member of $\Sigma^{\hat n}_{(\bb U,\bb Y)}$ (namely $\hat{\mathcal S}$) has transfer function value $H(\sigma)$ at $\sigma$.
\end{proof}

\begin{remark}
    In \cite{burohmanBSC2020informativity}, the authors hint at such a result by not assuming minimality of the underlying system $\mc S$.  However, the  result is formulated using only {one specific} $n$, the order of the (potentially non-minimal) system.  The contribution of \cref{prop:DontNeedFulln} is to show that knowledge of any $\hat n \geq n$ is sufficient to calculate frequency information from time-domain data, thus showing that there are infinitely many values of `$n$' that will allow for accurate recovery of frequency information. We also emphasize that in light of \cref{prop:DontNeedFulln}, there is no need to assume minimality of $\mc S$ (as we did at the beginning of \cref{subsec:ProbForm} to simplify the discussion).
\end{remark}

In our work we assume access only to the time-domain data $(\bb U,\bb Y)$, not to the knowledge of $n$ and hence we do not know an $\hat n \geq n$.  However, methods such as MIMO Output-Error State Space (MOESP) \cite{Li2012MatrixPencils,VerhaegenD1991Subspace1} are able to provide an estimate $N \approx n$.  We then use $N$ in place of $n$ to construct the matrix $\mbf G_{N}$ as in \eqref{eq:GnDef} and the vectors $\mbf z(\sigma)$ as in \eqref{eq:zSigmaDef} and $\mbf b(\sigma)$ as in \eqref{eq:bSigmaDef}; and proceed according to the results in \cref{sec:Implementation}.  If it happens that $N \ll n$,  it is possible that $M_0$ is recovered with large error.
The error indicator described in \cref{sec:windowing} will allow us to quantify the error incurred {for this substitution}.  Since \cref{prop:DontNeedFulln} guarantees that there is no penalty for \emph{overestimating} $n$, we are able to try the same procedure with $\tilde n > N$ until the error indicator is acceptably low.  In the numerical example below, we demonstrate \cref{prop:DontNeedFulln} and show that a slight underestimate of $n$ typically works well in practice.

We conduct the following experiment: for the system $\mathcal S_0$ with transfer function $H_0(z)$ (introduced in \cref{sec:orthogonalization}), we simulate $\mathcal S_0$ for $T=1,\!000$ time steps to obtain $\bb U \in \bb R^{T+1}$ and $\bb Y \in \bb R^{T+1}$.  Then, following \cref{sec:windowing}, using only $\bb U$ and $\bb Y$ for each
$\tilde n = 20,21, \ldots 200$
we form $K = 20$ data windows of length $3\tilde n$, construct orthogonal bases $\mbf U_{k_i} = \orth (\mbf G_{k_i,\tilde n})$, and solve \eqref{eq:LSSolution_window} for each of the $K$ orthogonal bases to obtain $\{M_{0,k_i}^{(\tilde n)}\}_{i=1}^K$, which provides a total of $K$ estimates of $H_0(\sigma)$ obtained by using $\tilde n$ in place of $n$.  Then as in \cref{sec:windowing} we use the $W = 10$ values of $M_{0,k_i}^{(\tilde n)}$ corresponding to smallest residuals of \eqref{eq:LSSolution_window} to calculate $M_{0}^{(\tilde n)} \approx H_0(\sigma)$ via \eqref{eq:M0isAverage}.

Since we have access to the true value of $H_0(\sigma)$, we calculate the relative error as
\begin{equation}
    \epsilon_{rel}(\tilde n) =\frac{|H_0(\sigma)-M_{0}^{(\tilde n)}(\sigma)|}{|H_0(\sigma)|}.
\end{equation}
We also keep track of $s_W(\tilde n)$, the standard deviation error indicator \eqref{eq:sw} for $M_0^{(\tilde n)}(\sigma)$.
The dependence of $\epsilon_{rel}(\tilde n)$ and $s_W(\tilde n)$ on $\tilde n$ is displayed in \cref{fig:EpsRelVsNTilde}. First observe that, as before,  $s_W$ closely tracks the true error $\epsilon_{rel}(\tilde n)$.

\begin{figure}[!htb]
	\centering
    \includegraphics[width=.8\textwidth]{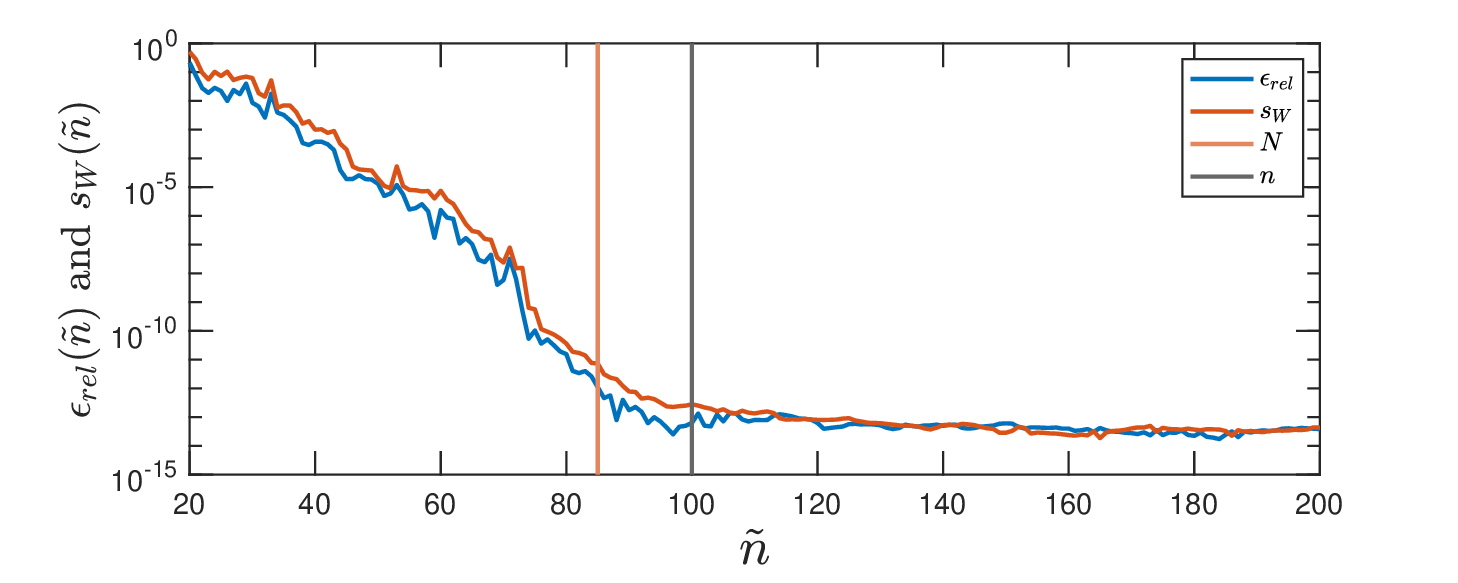}
	\caption{Relative error in recovered frequency information for $\tilde n$ between $20$ and $200$.  The vertical orange line indicates the $N \approx n$ calculated using MOESP.}
	\label{fig:EpsRelVsNTilde}
\end{figure}

\cref{fig:EpsRelVsNTilde} also provides the numerical confirmation of \cref{prop:DontNeedFulln}.  We see that for $\tilde n \geq n = 100$, the relative error in recovered frequency information $M_0^{(\tilde n)}$ is on the order of machine precision.  \cref{fig:EpsRelVsNTilde} also shows that the relative error decreases gradually as $\tilde n$ approaches $n$; there is no sharp drop off in the error.  This implies that if some error in the recovered frequency information is acceptable, one may use a value of $\tilde n < n$.  Indeed, we see the numerically estimated value $N = 85$ yields a
relative error less than $10^{-12}$.  We re-emphasize that for each value of $\tilde n$, the standard deviation error indicator $s_W(\tilde n)$ closely follows the relative error $\epsilon_{rel}(\tilde n)$.  So in practice when only $\bb U$ and $\bb Y$ are available we first numerically calculate $N$, then attempt to recover $M_0$ via \eqref{eq:LSSolution_window}, recording $s_W$.  If $s_W$ is large, we choose $\tilde n > N$ and again attempt to recover frequency information, repeating until $s_W$ is sufficiently small.

\subsection{Algorithm}
\label{Sec:Algorithm}
This section provides a summary (\cref{alg:DataInform}) of the contributions of \cref{sec:Implementation} and provides guidance on parameter choices.  References to where each line is discussed are provided.  

\begin{algorithm}[!htb]
\caption{Inferring Transfer function values  from time-domain data}\label{alg:DataInform}
\begin{algorithmic}[1]
\Require Time-domain input-output data $(\bb U,\bb Y)$ and $\{\sigma_j\}_{j=1}^m \subset \bb C$, points to learn the transfer function.\\
\Return $\{M_0(\sigma_j)\}_{j=1}^m$, estimates to $H(z)$ at each $\sigma_j$
\State Choose approximate order $\tilde n$ (\cref{sec:calc_n_hat}).
\State Set $t \gets k < T$ (we use $k = 3\tilde n$) (\cref{sec:windowing}).
\State Choose $K$ and $W$, the size of the subsets $I_K$ and $I_W$.
(\cref{sec:windowing}).
\State Choose tolerances $\tau_1$ and $\tau_2$ for rank conditions \cref{eq:UniqueCondMatVec}  and \cref{eq:ExistCondMatVec} (\cref{sec:RankCondsAsMatVecs})
\State Calculate orthogonal bases $\mbf U_{k}\in \bb C^{2(\tilde n+1)\times p}$ for $k = 1,2,\ldots,K$ data windows of length $t$ (\cref{sec:windowing} and \cref{sec:orthogonalization}).
\For{$j = 1,2,\ldots,m$}
\State Form $\mbf z(\sigma)$ and $\mbf b(\sigma)$ as in \cref{eq:zSigmaDef} and \cref{eq:bSigmaDef}.
\For{$k = 1,2,\ldots K$}
\If{\cref{eq:UniqueCondMatVec} and \cref{eq:ExistCondMatVec} are satisfied}
\begin{equation}
    \label{eq:LSProblemInAlg}
    \tag{47}
     \mbf x \coloneqq \argmin_{\hat{\mbf x} \in \bb C^{p+1}}\left\|\begin{bmatrix}\mbf U_k & \mbf z(\sigma)\end{bmatrix}\hat{\mbf x} - \mbf b(\sigma)\right\|
     ~\mbox{and~set}~M_{0,k}(\sigma_i) = {\mbf x}_{p+1}
\end{equation}
\EndIf
\EndFor
\State Select $\{M_{0,k_i}\}_{i = 1}^W \subseteq \{M_{0,k}\}_{k = 1}^K$ according to the lowest residuals of \cref{eq:LSProblemInAlg} 
\State Set $M_0(\sigma_j) \gets \frac{1}{W}\sum_{i = 1}^{W}M_{0,k_i}$  \cref{eq:M0isAverage}
\EndFor
\end{algorithmic}
\end{algorithm}

\begin{remark}
    \label{rmk:DerivativeAlg}
    \cref{alg:DataInform} is presented for the recovery of values of the transfer function $H(\sigma)$ given time domain data $(\bb U,\bb Y)$. However, the procedure to recover transfer function derivatives is nearly identical.  The only change is that the vector $\mbf b(\sigma)$ in \cref{eq:LSSolution_window} becomes $\bmat{\gamma_n^{(1)}(\sigma)^{\top} & M_0\gamma_n^{(1)}(\sigma)^{\top}}^{\top}$ (see \cref{eq:calcMkOrig}) and the condition \cref{eq:ExistCondMatVec} must be checked with the new $\mbf b(\sigma)$.
\end{remark}

\section{Numerical results}
\label{sec:Results}

In this section, we apply \cref{alg:DataInform} 
to three dynamical systems to construct DDROMs via frequency-based techniques using frequency data recovered from time-domain data using \cref{alg:DataInform}.
{These numerical experiments were performed on a 2023 MacBook Pro equipped with 16 GB RAM and an Apple M2 Pro chip running macOS Ventura 13.4.1.  All algorithms are implemented in \MATLAB version 23.2.0.2365128 (R2023b).  All code and data are available in} \cite{supAck23}.
We  examine three modekls: (i) a synthetic system with randomly placed poles in the unit disc in \cref{subsec:SynthEx}, (ii) the fully discretized heat model  from~\cite{Chahlaoui2002Benchmarks} in \cref{sec:HeatModel}, and (iii) Penzl's model~\cite{Penzl2006Algorithms} in \cref{Sec:PenzlEx}.  {For each system we calculate transfer function values and derivatives from time-domain data at specified points $\{\sigma_i\}_{i=1}^m \subset \bb C$ via \cref{alg:DataInform} and record the relative errors as
\begin{equation}
    \label{eq:ErrRecoverdData}
    \varepsilon_0 = \dfrac{\|\mbf H_0 - \widehat{\mbf H}_0\|_2}{\|\mbf H_0\|_2}\quad \mbox{and} \quad
        \varepsilon_1 =
    \dfrac{\|\mbf H_1 - \widehat{\mbf H}_1\|_2}{\|\mbf H_1\|_2},
\end{equation}
where $\mbf H_0 = [H(\sigma_1),\ldots,H(\sigma_{m})]^{\top}$ is the vector of true transfer function values, $\mbf H_1 = [H'(\sigma_1),\ldots,H'(\sigma_{m})]^{\top}$ is the vector of true  derivative values, and $\widehat{\mbf H}_0 = [M_0(\sigma_1),\ldots,M_0(\sigma_{m})]^{\top}$ and $\widehat{\mbf H}_1 = [M_1(\sigma_1),\ldots,M_1(\sigma_{m})]^{\top}$ are the vectors of recovered transfer function values and derivatives, respectively.}   In every example, we use $\tau_1 = \tau_2 = 10^{-10}$ (\cref{sec:RankCondsAsMatVecs}).  We also keep the number of windows $K = 20$ and the size of the subset of these windows used to calculate $M_0$, $W=10$ constant for each example (\cref{sec:windowing}), except for Penzl's example where we use $K = 40$.

We then use the recovered frequency information to construct DDROMs using some of the well-established frequency domain reduced-order modeling techniques. Specifically, we use the Loewner framework~\cite{MayoA2007LoewnerFramework}, the Hermite Loewner framework~\cite{MayoA2007LoewnerFramework}, and Vector Fitting \cite{DeschrijverMDZ2008FastVF,Gustavsen2006PoleRelocVF,semlyenG1999VF}. 
Our focus in these numerical examples is to employ data-informativity in frequency-based techniques and investigate the impact of inferred values on the DDROM construction. In a future benchmark paper, we will compare and study a wider class of methods including those that produce DDROMs directly from time-domain data.

To form an order-$r$ DDROM of a transfer function $H(z)$, the Loewner framework requires two sets of interpolation points: $\{\sigma_i\}_{i=1}^r$, and $\{\mu_i\}_{i=1}^r$ and their corresponding values $\{H(\sigma_i)\}_{i=1}^r$, and $\{H(\mu_i)\}_{i=1}^r$.  The resulting DDROM, $H_r^{L}(z)$, interpolates $H(z)$ at each interpolation point, i.e.,
$H_r^{L}(\sigma_i) = H(\sigma_i)$ and $H_r^{L}(\mu_i) = H(\mu_i).$

The Hermite Loewner framework takes 
$\sigma_i = \mu_i$, i.e., uses only one set of interpolation points $\{\sigma_i\}_{i=1}^r$, but requires knowledge of transfer function values $\{H(\sigma_i)\}_{i=1}^r$ and derivatives $\{H'(\sigma_i)\}_{i=1}^r$.  Then the resulting DDROM, $H_r^{HL}(z)$, interpolates $H(z)$ and $H'(z)$ at each interpolation point, i.e.,
$H_r^{L}(\sigma_i) = H(\sigma_i)$ and ${H'}_r^{L}(\sigma_i) = H'(\sigma_i).$
For details on the Loewner framework, see e.g. \cite{AntoulasBG2020Book,BennerCOW2017ModRedApproxTA,MayoA2007LoewnerFramework}.

Finally, given $m > r$ points $\{\sigma_i\}_{i=1}^m \subset \bb C$ and the transfer function samples $\{H(\sigma_i)\}_{i=1}^m$, Vector Fitting performs a least squares fit to the given data.  Specifically, Vector Fitting tries to find an  order $r$ rational function $H_r^{VF}(z)$ that minimizes the least-squares error
{$\sum_{i = 1}^m\left|H(\sigma_i) - H_r(\sigma_i)\right|^2$.}
For details of Vector Fitting, we refer the reader to \cite{DeschrijverMDZ2008FastVF,DramacGB2015QuadVF,Gustavsen2006PoleRelocVF,semlyenG1999VF}.

To measure the performance of our DDROMs we use the $\mc H_{\infty}$ norm of a transfer function $H$ defined as $\|H\|_{\mc H_{\infty}} = \max_{\omega \in [-\pi,\pi]}|H(e^{\mbf i\omega})|$.

\subsection{Synthetic example}
\label{subsec:SynthEx}
We construct an order $n=1000$ system $\mathcal S_2$ with transfer function
$H_2(z)$ with $1000$ randomly placed poles in the open unit disc and with randomly generated residues, which are both closed under conjugation.  We simulate $\mc S_2$ for $T = 1000$ time steps with random Gaussian input.  We then use \cref{alg:DataInform} (and its derivative version, see \cref{rmk:DerivativeAlg})
to  compute the estimates $M_0(\sigma_i)$ (for the transfer function value $H_2(\sigma_i)$) and  $M_1(\sigma_i)$ (for the derivative value $H_2'(\sigma_i)$) where
$\sigma_i = e^{\mbf i\omega_i}$ with $\omega_i$  logarithmically spaced in $[10^{-2},\pi)$ for
$i = 1,2,\ldots, 400$. Using~\cite{VerhaegenD1991Subspace1}, we estimate (and take) {$N = 183$}, the numerically estimated system order.

\begin{figure}
     \centering
     \subfloat{\includegraphics[scale = .32]{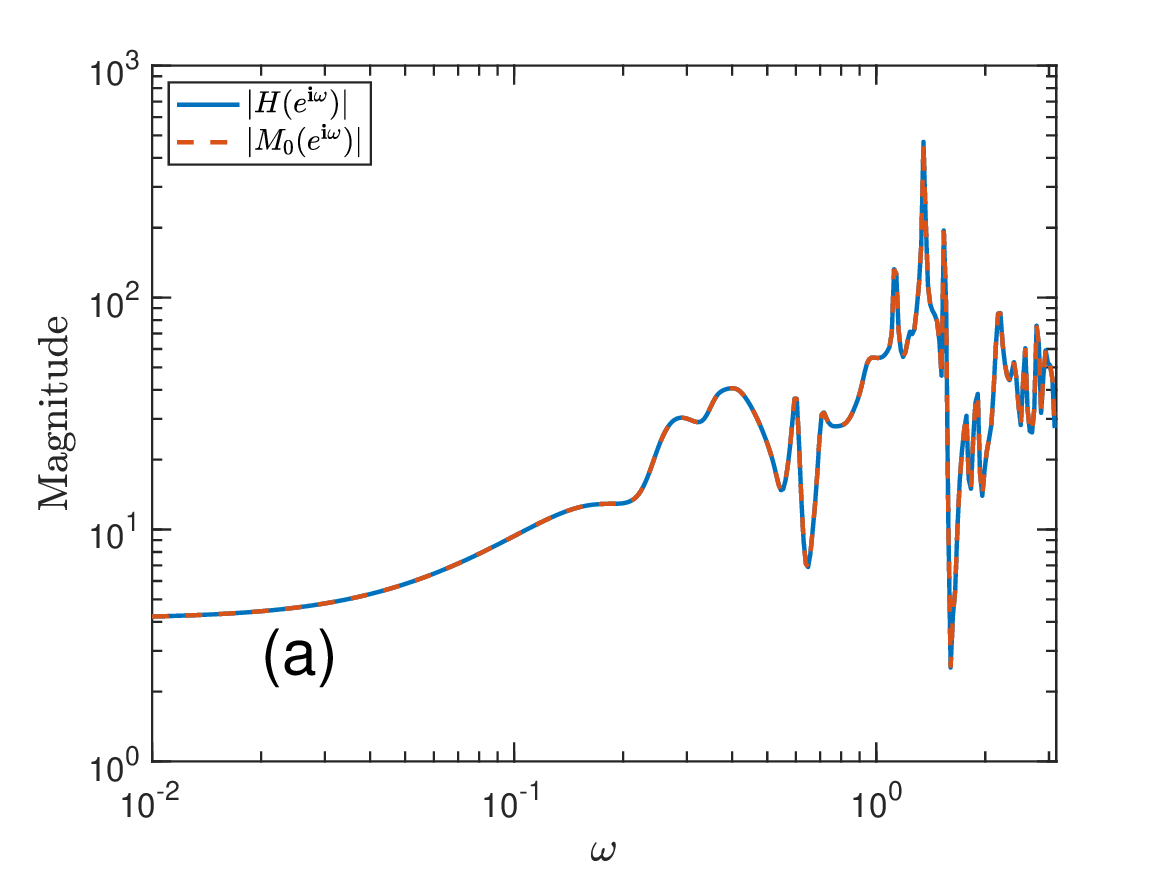}}
     \subfloat{\includegraphics[scale = .32]{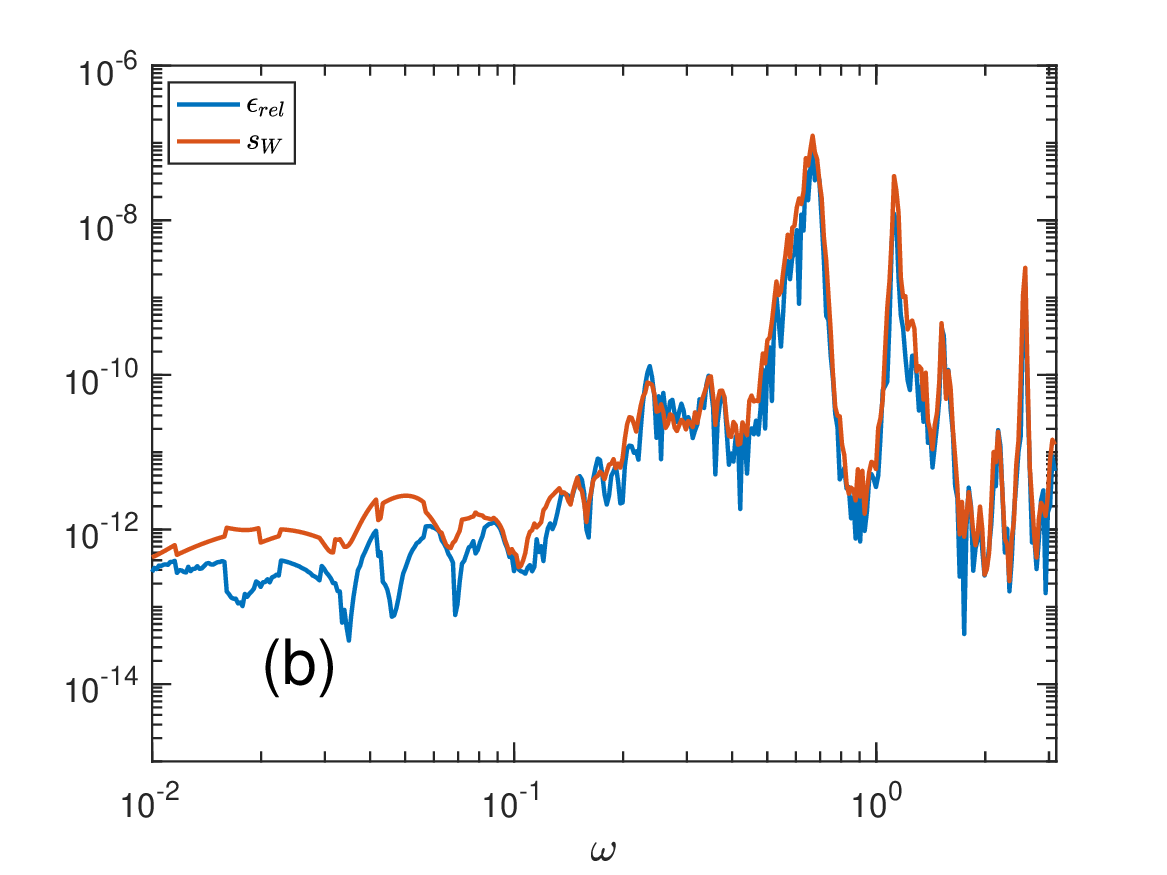}}
     
     \subfloat{\includegraphics[scale = .32]{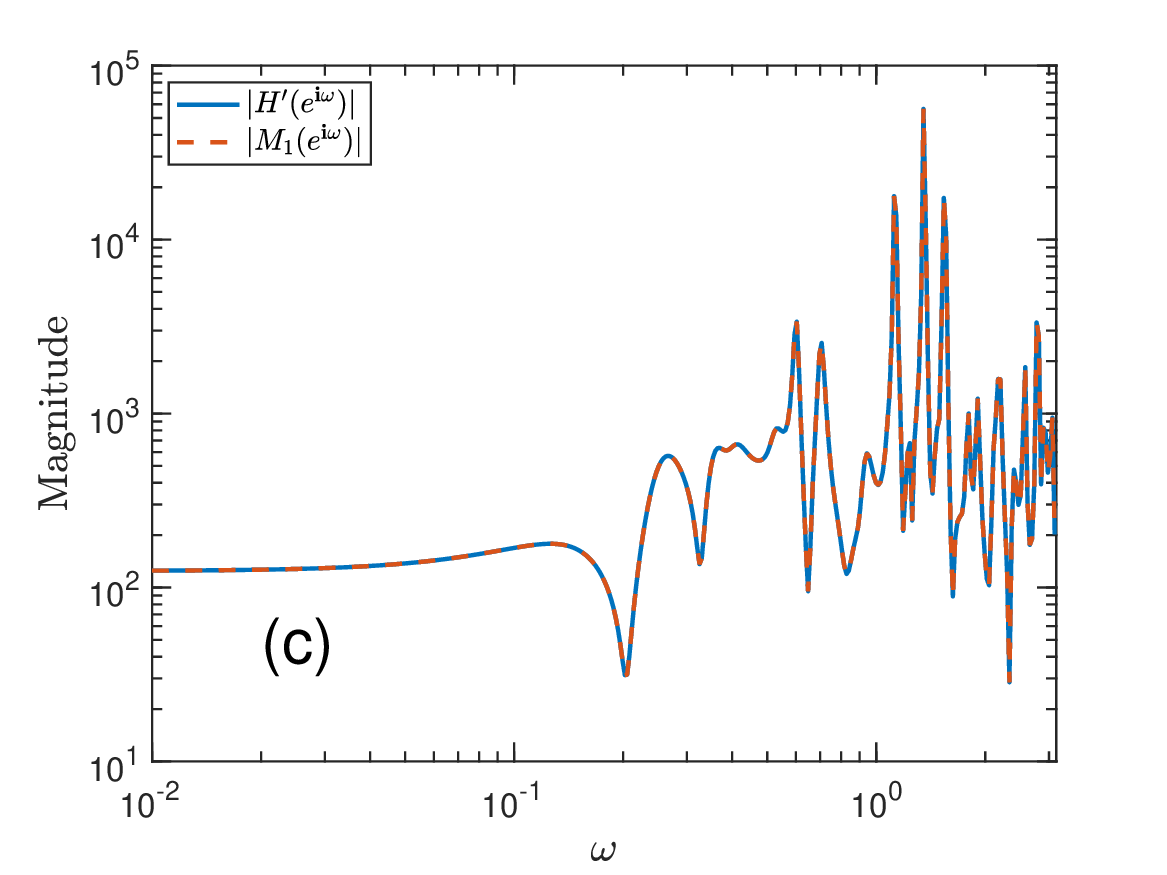}}
     \subfloat{\includegraphics[scale = .32]{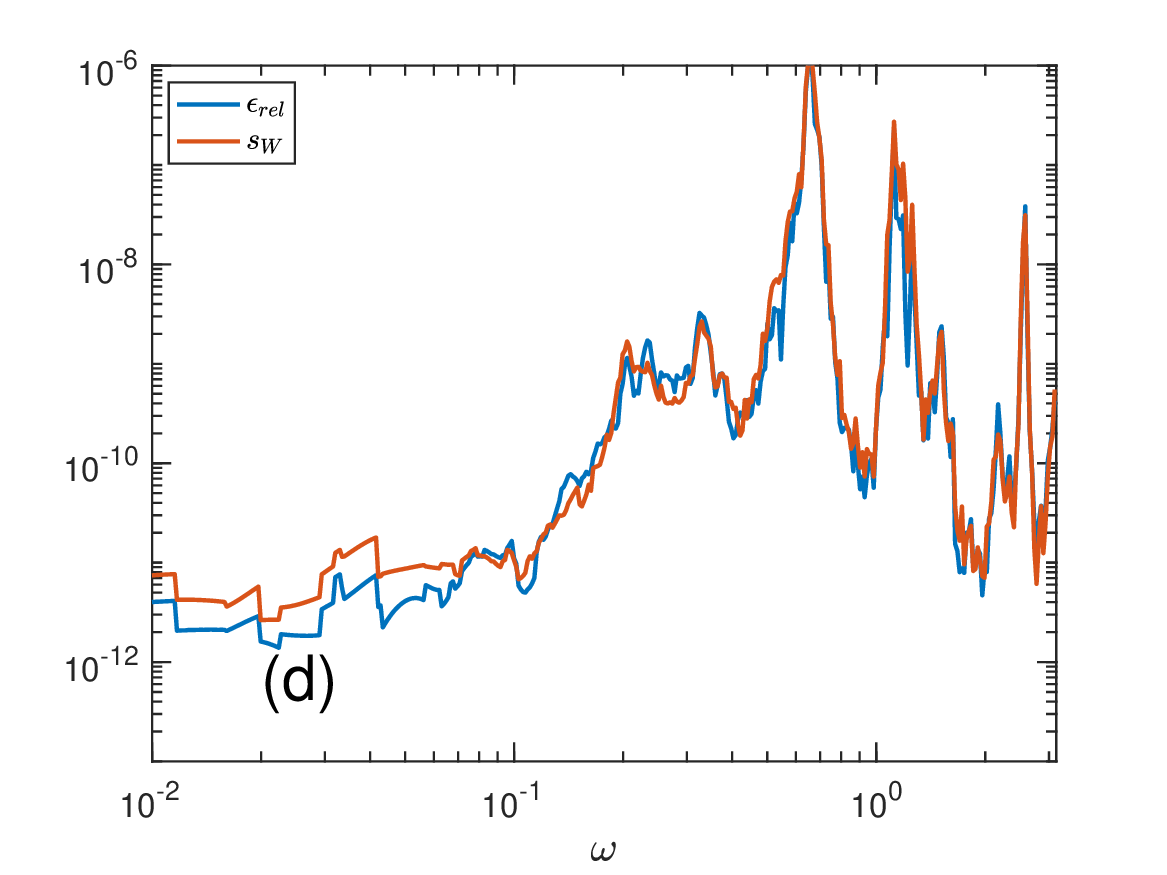}}
     \caption{Frequency responses ((a) and (c)) and point-wise relative errors (plots (b) and (d)) of DDROMs approximating $H_2$, constructed using frequency data recovered from time-domain data via \cref{alg:DataInform} ((a) and (b)) and true frequency data ((c) and (d)).}
     \label{fig:RelErrAndFreqRespSynth}
\end{figure}

{We show the accuracy of recovered transfer function values  $M_0(e^{\mbf i \omega_i}) \approx H_2(e^{\mbf i \omega_i}) $ and recovered derivative values $M_1(e^{\mbf i \omega_i}) \approx H_2'(e^{\mbf i \omega_i}) $, respectively, in the top and bottom rows of 
\cref{fig:RelErrAndFreqRespSynth} together with the point-wise relative error $\epsilon_{rel}$ and 
the normalized standard deviation $s_W$ in both cases. As the figure  illustrates, both 
$H_2(e^{\mbf i \omega_i})$  and $H_2'(e^{\mbf i \omega_i})$
are recovered to high accuracy, with a maximum point-wise relative error of $8.27 \times 10^{-8}$ in the transfer function values and a maximum point-wise relative error of  $1.23 \times 10^{-6}$ in the derivatives. 
We also point out that the normalized standard deviation, $s_W$ (computed purely from the data without any knowledge of true values) provides an accurate estimate of the true relative error $\epsilon_{rel}$.} 
Using \cref{eq:ErrRecoverdData}, we calculate the vectorized (as opposed to pointwise) relative errors in~\cref{eq:ErrRecoverdData} to obtain $\varepsilon_0 = 3.10 \times 10^{-9}$ and $\varepsilon_1 = 6.04 \times 10^{-8}$.
Note that the derivative information is recovered with a lower accuracy.  This is expected, as the method to recover $M_1$ uses $M_0$ (see \cref{eq:calcMkOrig}).  So any inaccuracies in recovering $M_0$ will compound in recovering $M_1$.

We now use the recovered values and derivatives of $H_2(z)$ to form order $r = 100$ DDROMs of $\mathcal S_2$.  Since $\mathcal S_2$ is a real system, we obtain conjugate information on the unit circle at no additional cost using  $H_2(\overline{\sigma_i}) = \overline{H_2(\sigma_i)}$ where
$\sigma_i = e^{\mbf i \omega_i}$ for $i = 1,2,\ldots, 400$.
We supply all of our recovered 800 values (and, when appropriate, its derivatives) to construct order $r=100$ DDROMs via the Loewner framework ($\widehat H_{\texttt{L}}$), the Hermite Loewner framework ($\widehat H_{\texttt{LH}}$), and Vector Fitting ($\widehat H_{\texttt{VF}}$). We also produce DDROMs using the \emph{true} frequency information via each of the three methods and denote them by $\widetilde H_{L}$ (Loewner), $\widetilde H_{LH}$ (Hermite Loewner), and $\widetilde H_{VF}$ (Vector Fitting). {We note that the Hermite Loewner and Loewner models are truncated to order $r=100$ using the singular value decomposition of the resulting Loewner matrices, see \cite{AntoulasBG2020Book} for details.} 
{When partitioning the interpolation points for the Loewner framework, we follow the guidance of \cite{EmbreeI2022} and interweave the two sets of interpolation points. {We employ these procedures for Loewner-based models for all the numerical examples.}}

In \cref{fig:RelErrAndFreqRespSynthROMs}-(a), we show the amplitude frequency response plot of the original model $H$ and together with those of the three DDROMs, $\widehat H_{\texttt{L}}$, $\widehat H_{\texttt{HL}}$, and  $\widehat H_{\texttt{VF}}$,
obtained from the recovered frequency information, illustrating that they all accurately match the original dynamics. The corresponding error plots are shown in \cref{fig:RelErrAndFreqRespSynthROMs}-(b). In \cref{fig:RelErrAndFreqRespSynthROMs}-(c) and \cref{fig:RelErrAndFreqRespSynthROMs}-(d),
we repeat the same procedure for the 
 three DDROMs, $\widetilde H_{\texttt{L}}$, $\widetilde H_{\texttt{HL}}$, and  $\widetilde H_{\texttt{VF}}$, obtained from the true frequency information. These plots illustrate that for this example the DDROMs obtained via recovered frequency data retain almost the same approximation quality as those obtained from true data. This is a  promising result since we are able to capture the accuracy of the  well-established frequency-domain techniques without having access to the frequency samples.

To better quantify the impact of learning the reduced models from recovered frequency data, we compute various relative $\mathcal H_{\infty}$ distances in 
\cref{tab:H2ErrSynth}. While the first row shows the relative $\mathcal H_\infty$ distances between the full model and the three DDROMs obtained via recovered data, the second row shows the same distances for the three DDROMs obtained via exact frequency response data. As these numbers illustrate, in this example, learning the reduced models via recovered data does not impact the $\mathcal H_\infty$ performance, (at least in the leading three significant digits). The last row of \cref{tab:H2ErrSynth} shows the relative $\mathcal H_\infty$ distance between the DDROMs, for each method, obtained via true and recovered data. These numbers more clearly illustrate that in this example the data informativity framework allows us to accurately capture the performance of the frequency-based modeling techniques without having direct access to the data. 

 \begin{figure}
     \centering
     \subfloat{\includegraphics[scale = .32]{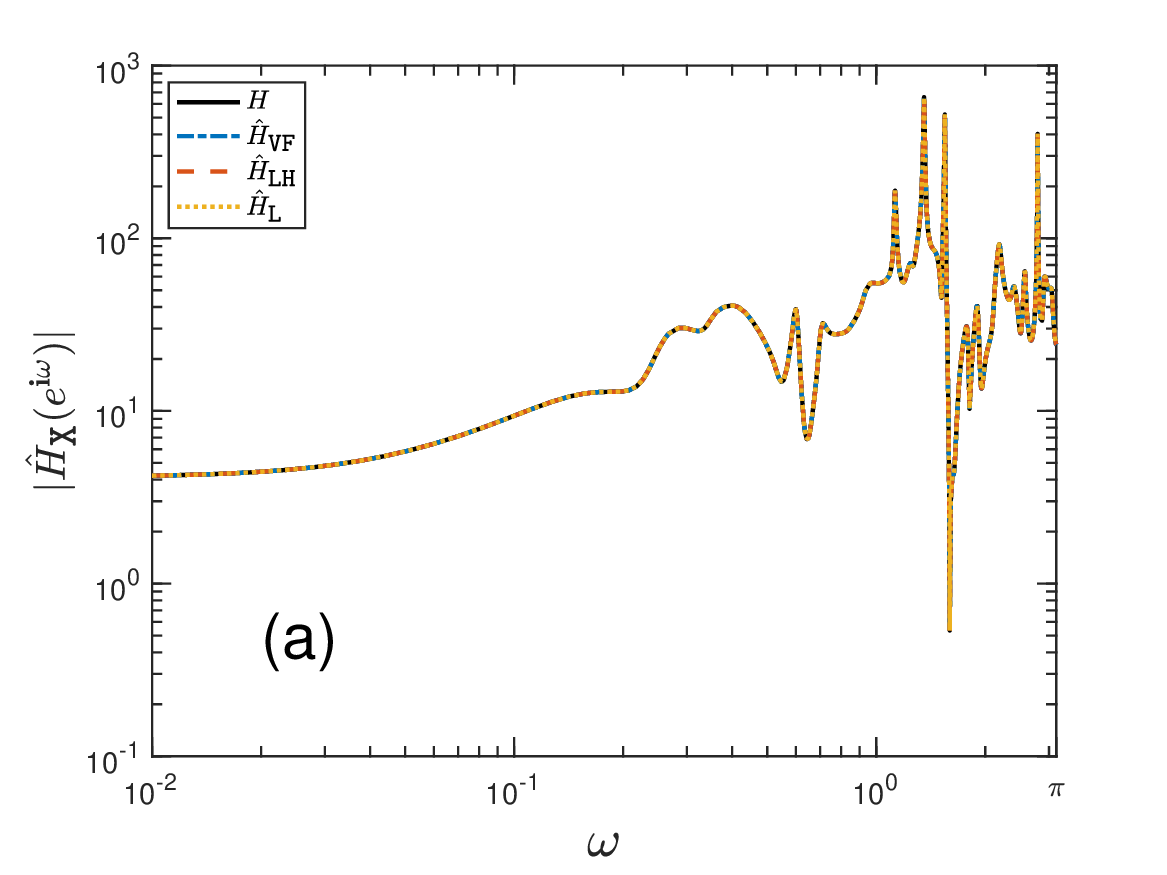}}
     \subfloat{\includegraphics[scale = .32]{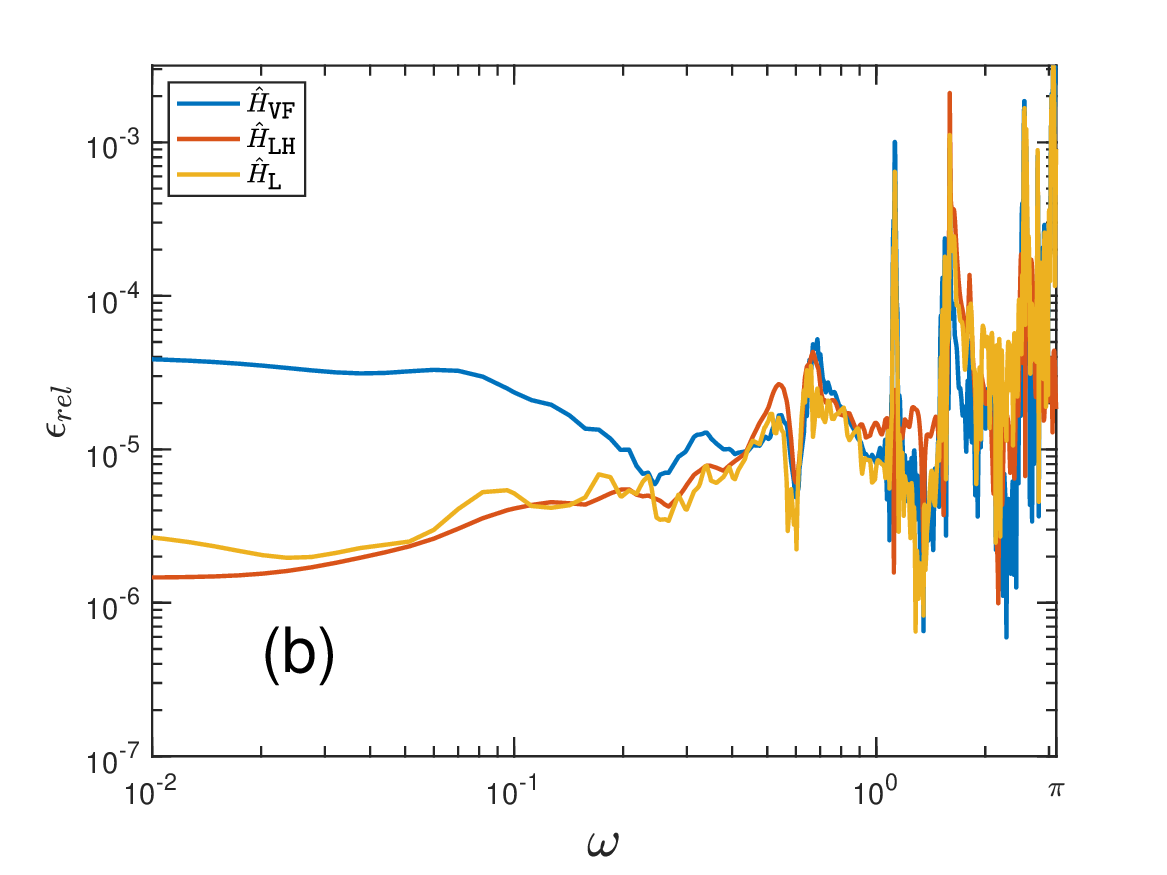}}
     
     \subfloat{\includegraphics[scale = .32]{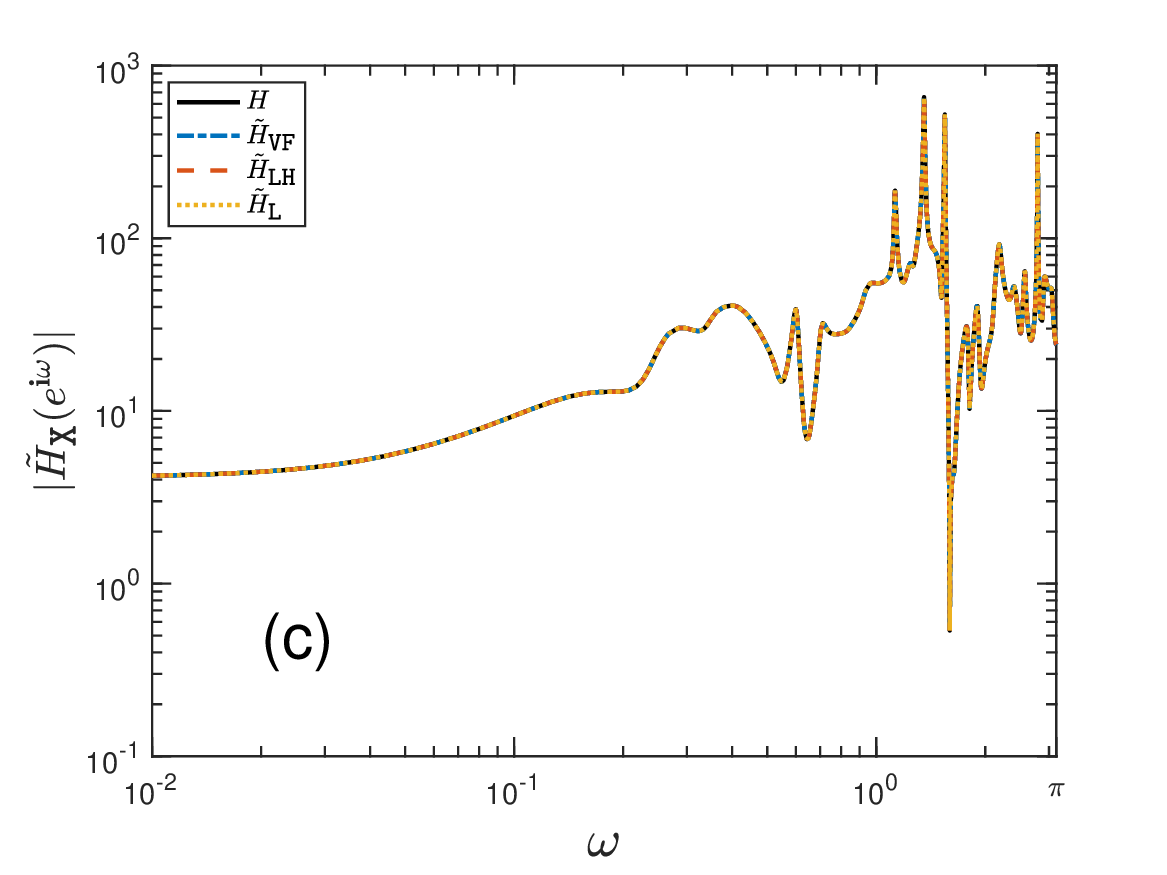}}
     \subfloat{\includegraphics[scale = .32]{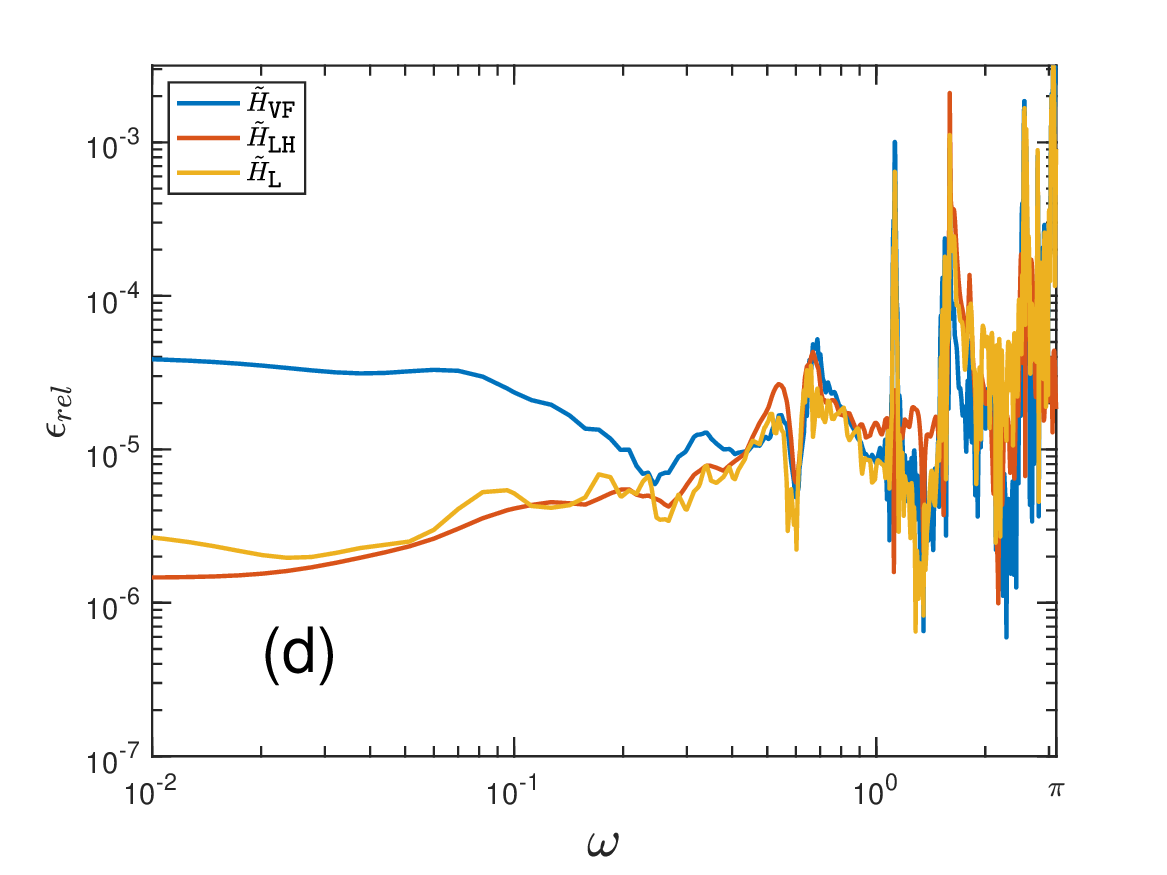}}
     \caption{Frequency responses ((a) and (c)) and point-wise relative errors (plots (b) and (d)) of DDROMs approximating $H_2$, constructed using frequency data recovered from time-domain data via \cref{alg:DataInform} ((a) and (b)) and true frequency data ((c) and (d)).}
     \label{fig:RelErrAndFreqRespSynthROMs}
 \end{figure}

\begin{table}[!htb]
\caption{$\mathcal H_\infty$ errors between the true transfer function $H_2$, DDROMs constructed from true data $\widetilde H_{\texttt X}$, and DDROMs constructed from recovered data $\widehat H_{\texttt X}$}
    \centering
        \begin{tabular}{|c|c|c|c|}
            \hline
             Relative $\mc H_{\infty}$ error & Loewner & Hermite Loewner & Vector Fitting\\
            \hline
            \rule{0pt}{3ex}$\|H_2-\widehat H_{\texttt X}\|_{\mathcal H_{\infty}}/\|H_{2}\|_{\mathcal H_\infty}$&$5.44\times 10^{-4}$&$6.49\times 10^{-5}$ & $4.99\times 10^{-4}$\\[1ex]\hline
            \rule{0pt}{3ex}$\|H_{2}-\widetilde H_{\texttt X}\|_{\mathcal H_{\infty}}/\|H_2\|_{\mathcal H_\infty}$&$5.44\times 10^{-4}$&$6.49\times10^{-5}$ & $4.99 \times 10^{-4}$\\[1ex]\hline
            \rule{0pt}{3ex}$\|\widetilde H_{\texttt X}-\widehat H_{\texttt X}\|_{\mathcal H_{\infty}}/\|\widetilde H_{\texttt X}\|_{\mathcal H_{\infty}}$&$3.16\times 10^{-8}$&$3.18\times 10^{-8}$& $1.48 \times 10^{-8}$\\[1ex]\hline
        \end{tabular}

     \label{tab:H2ErrSynth}
\end{table}

\subsection{Heat Model}
\label{sec:HeatModel}
We now consider the system $\mc S_3,$ a fully discretized model of heat diffusion in a thin rod \cite{Chahlaoui2002Benchmarks}.  This model captures the dynamics of the temperature of a thin rod with heat applied at $1/3$ the length of the rod, with temperature measured at $2/3$ the length of the rod.  The PDE describing the dynamics was discretized to an order $n = 200$ continuous dynamical system, then was discretized in time using the Crank-Nicholson method to obtain $\mathcal S_3$.  For more information, see \cite{Chahlaoui2002Benchmarks}.

We simulate $\mc S_3$ with a Gaussian random input for $T = 1,\!000$ time steps and use \cref{alg:DataInform} with $N = 20$ (calculated by \cite{VerhaegenD1991Subspace1}) to recover estimates $M_0(\sigma_i)$ and $M_1(\sigma_i)$ to $H_3(\sigma_i)$ and $H'_3(\sigma_i)$ where $\sigma_i = e^{\mbf i\omega_i}$, with $\omega_i$ logarithmically spaced in $[10^{-4},\pi)$ and $i = 1,2,\ldots, 500$.  The accuracy of recovered frequency information as calculated by \cref{eq:ErrRecoverdData} is $\varepsilon_0 = 6.44 \times 10^{-9}$ and $\varepsilon_1 = 4.62 \times 10^{-8}$.  

Since $\mc S_3$ is a real system, we again obtain $H(\overline{\sigma_i}) = \overline{H(\sigma_i)}$ at no additional cost.
Using this data, we construct order $r = 10$ approximations using Loewner, Hermite Loewner, and Vector Fitting, $\widehat H_{\texttt{L}}, \widehat H_{\texttt{LH}},$ and $\widehat H_{\texttt{VF}}$, respectively.  We also construct DDROMs from true frequency data ($\widetilde H_{\texttt{L}}, \widetilde H_{\texttt{LH}},$ and $\widetilde H_{\texttt{VF}}$) for comparison.  The relative $\mathcal H_{\infty}$ errors of these models to the true transfer function $H_3$, as well as the relative $\mathcal H_{\infty}$ distances between the DDROMs is displayed in \cref{tab:H2ErrHeat}.  These $\mc H_{\infty}$ errors show that, as in the previuous example, the DDROMs produced from frequency information recovered via \cref{alg:DataInform} are able to mimic the performance of DDROMs built on true frequency response data.

\begin{table}[!htb]
    \centering
    \caption{$\mathcal H_{\infty}$ errors between the true transfer function $H_3$, DDROMs constructed from true data $\widetilde H_{\texttt X}$, and DDROMs constructed from recovered data $\widehat H_{\texttt{X}}$}
        \begin{tabular}{|c|c|c|c|}
            \hline
             Relative $\mc H_{\infty}$ error & Loewner & Hermite Loewner & Vector Fitting\\
            \hline
            \rule{0pt}{3ex}$\|H_3-\widehat H_{\texttt X}\|_{\mathcal H_{\infty}}/\|H_3\|_{\mathcal H_{\infty}}$&$1.67\times 10^{-6}$&$2.50\times 10^{-7}$ & $2.59\times 10^{-7}$\\[1ex]\hline
            \rule{0pt}{3ex}$\|H_3-\widetilde H_{\texttt X}\|_{\mathcal H_{\infty}}/\|H_3\|_{\mathcal H_{\infty}}$&$1.66\times 10^{-6}$&$2.32\times10^{-7}$ & $2.72 \times 10^{-7}$\\[1ex]\hline
            \rule{0pt}{3ex}$\|\widetilde H_{\texttt X}-\widehat H_{\texttt X}\|_{\mathcal H_{\infty}}/\|\widetilde H_{\texttt X}\|_{\mathcal H_{\infty}}$&$3.10\times 10^{-8}$&$2.92\times 10^{-8}$& $6.27 \times 10^{-8}$\\[1ex]\hline
        \end{tabular}
    \label{tab:H2ErrHeat}
\end{table}

\subsection{Penzl's example}
\label{Sec:PenzlEx}
Next, we investigate Penzl's time-continuous linear time invariant system introduced in \cite{Penzl2006Algorithms}.  This benchmark model has been frequently used in the context of reduced order modeling, see, e.g., \cite{Antoulas2005ApproxDynamSys,Ionita2013Thesis, PeherstorferGW2017TDLow}.  
Denote the system as {$\mathcal S^c_4$} and its transfer function as $H^c_4(z)$.
Following \cite{PeherstorferGW2017TDLow}, we discretize {$\mathcal S^c_4$} using the Implicit Euler method with step size $\delta t = 10^{-4}$ to obtain $\mathcal S_4$ with the corresponding transfer function $H_4(z)$.
We simulate $\mc S_4$ with a Gaussian random input for $T = 10,000$ time steps and use \cref{alg:DataInform} to recover estimates $M_0(\sigma_i)$ and $M_1(\sigma_i)$ to $H_4(\sigma_i)$ and $H'_4(\sigma_i)$ where $\sigma_i = e^{\mbf i\omega_i}$, with $\omega_i$ logarithmically spaced in $[-5,\pi)$ and $i = 1,2,\ldots, 140$.

Using \cite{VerhaegenD1991Subspace1} we estimate the system order to be $N = 15$. 
In contrast to the previous two examples, the calculated $N$ does not lead to accurate recovery of frequency information.  The standard deviation error indicator is nearly one for low frequencies, which accurately predicts the large relative errors $\varepsilon_0 = 7.48 \times 10^{-1}$ and $\varepsilon_1 \approx 1$.  {Motivated by the results of \cref{sec:calc_n_hat}, we test several values of $\tilde n > N$, monitoring the corresponding values of $s_W$ for each $\tilde n$.  After increasing the total number of windows to $K = 40$ and iteratively increasing $\tilde n$ while monitoring the error indicator $s_W$, we chose to use $\tilde n = 900$, for which  $s_W$ dropped below $1\%$ for nearly all values of $\omega$.} 

When $\tilde n = 900$ is used (as opposed to $N =15$), the corresponding relative errors were $\varepsilon_0 = 4.48 \times 10^{-3}$ and $\varepsilon_1 = 4.08 \times 10^{-2}$.
This shows the importance of the standard-deviation error indicator and \cref{prop:DontNeedFulln}. Without having any access to true data or true system order, we were able to judge the accuracy of the recovered response for the numerically estimated $N = 15$.  Since the error indicator was large, we used \cref{prop:DontNeedFulln} to try successively higher values of $\tilde n$ in place of $N$ until the estimated error $s_W$ was low enough.  This process resulted in a two order of magnitude reduction of the errors $\varepsilon_0$ and $\varepsilon_1$.

 \begin{figure}
     \centering
     \subfloat{\includegraphics[scale = .32]{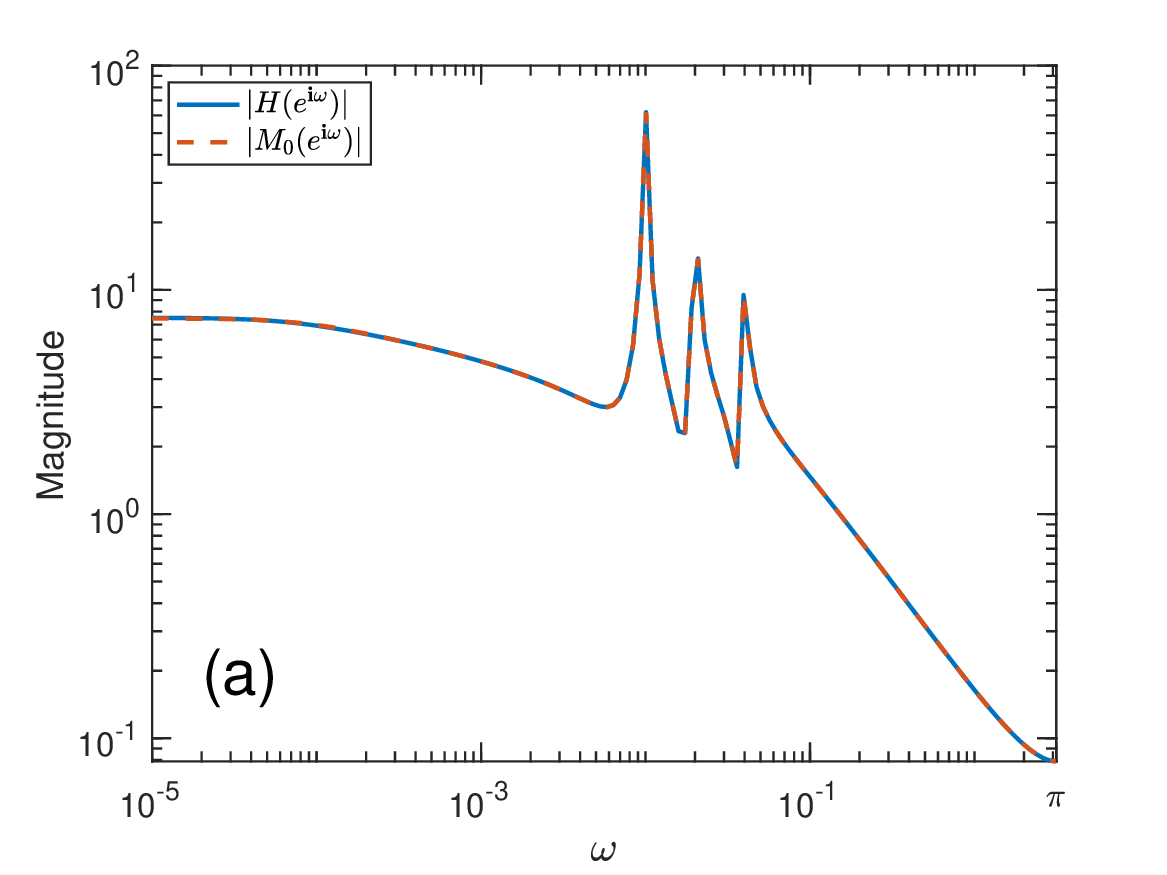}}
     \subfloat{\includegraphics[scale = .32]{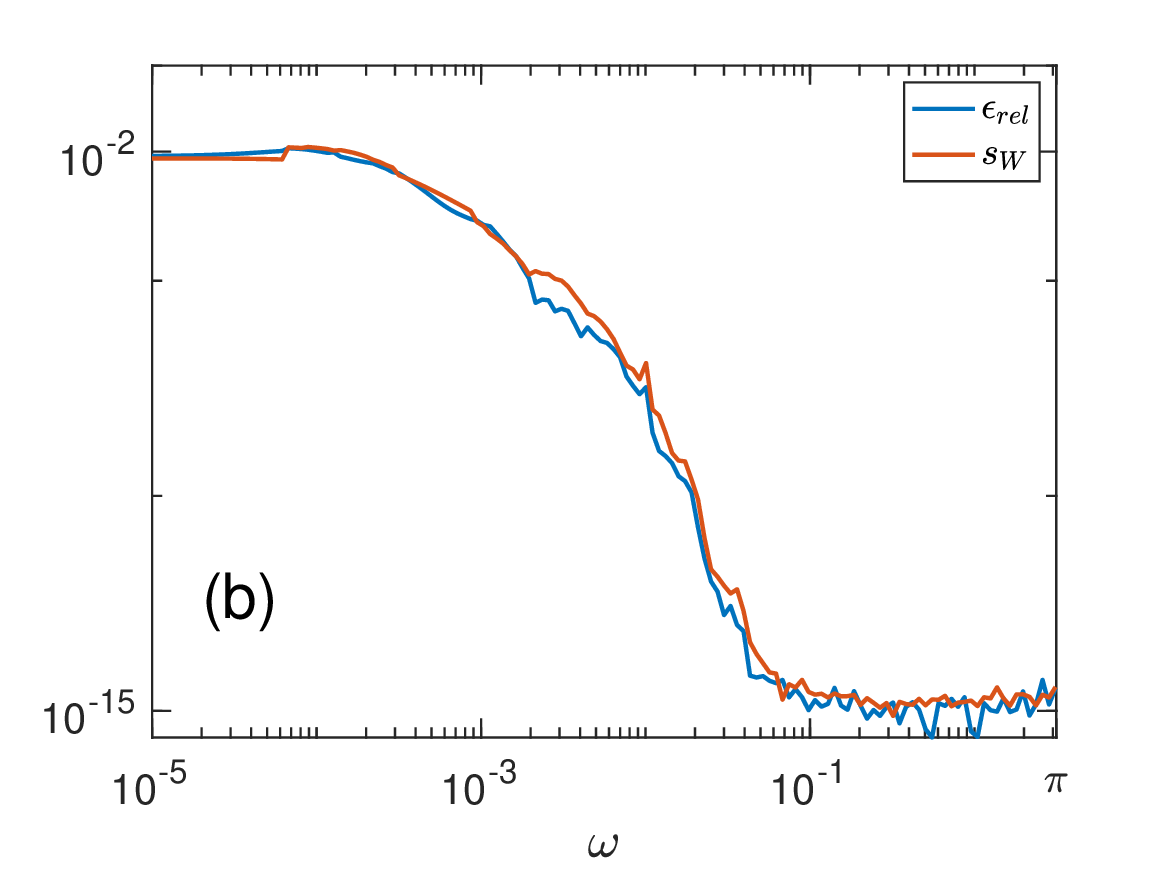}}
     
     \caption{Recovered transfer function values of $H_4$ (a), point-wise relative error of recovered values (b).}
    \label{fig:RelErrAndFreqRespPenzl}
 \end{figure}

We show the accuracy of recovered transfer function values $M_0(e^{\mbf i\omega}) \approx H_4(e^{\mbf i\omega})$ in \cref{fig:RelErrAndFreqRespPenzl}-(a) and the corresponding pointwise relative error in \cref{fig:RelErrAndFreqRespPenzl}-(b).  As these figures show, the data is recovered to the accuracy predicted by $s_W$.  

We now use the recovered values and derivatives of $H_4$ at $\sigma_i$ and $\overline{\sigma_i}$ (obtained at no cost since $H_4(\overline{\sigma_i}) = \overline{H_4(\sigma_i})$) to construct order $r = 14$ DDROMs of $\mc S_4$.  As in the previous sections, we construct DDROMs using Loewner $(\widehat H_{\texttt L})$, Hermite Loewner $(\widehat H_{\texttt{LH}})$, and Vector Fitting $(\widehat H_{\texttt{VF}})$ using the recovered data, as well as their true data counterparts ($\widetilde H_{\texttt L}$, $\widetilde H_{\texttt{LH}}$, and $\widetilde H_{\texttt{VF}}$).  For $\widehat H_{\texttt{VF}}$ only, we use a weighted least-squares problem and set the weight of each recovered transfer function value as $\sqrt[4]{s_W}$.

\begin{figure}
     \centering
     \subfloat{\includegraphics[scale = .32]{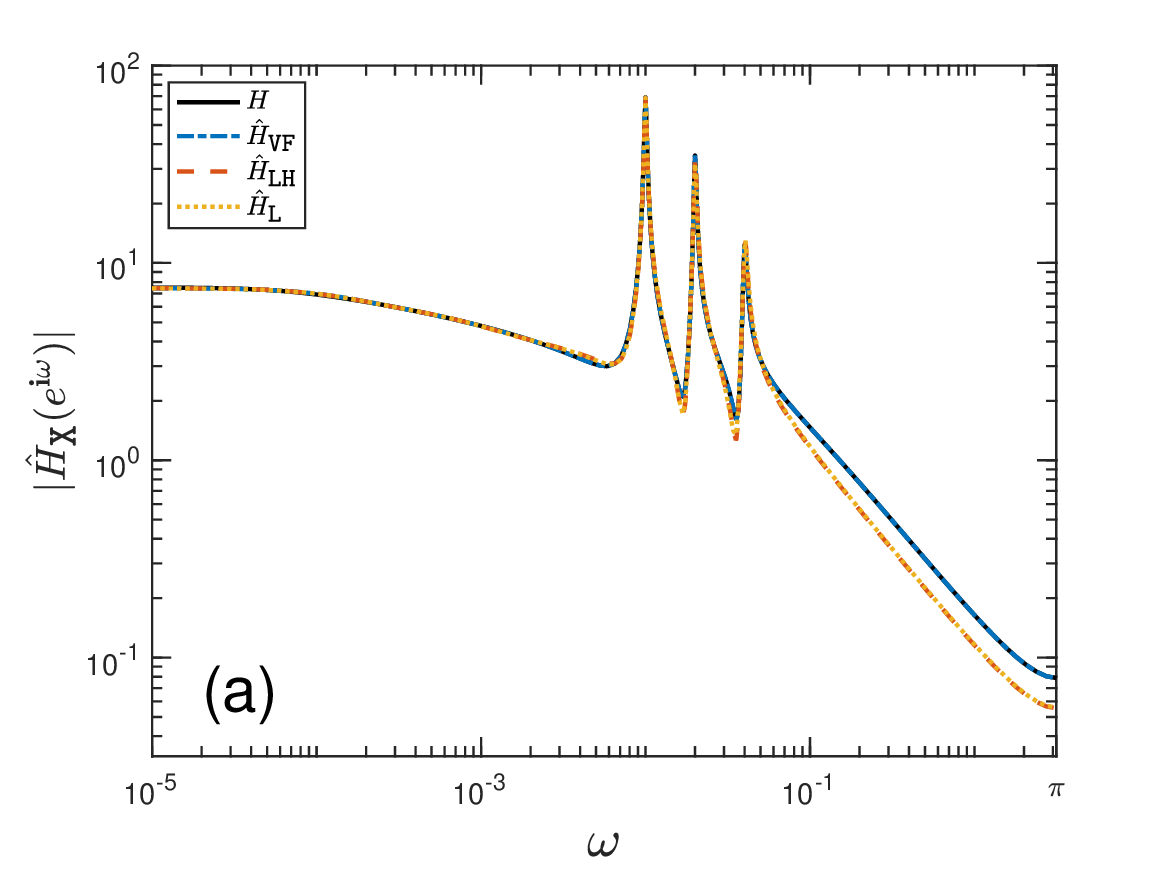}}
     \subfloat{\includegraphics[scale = .32]{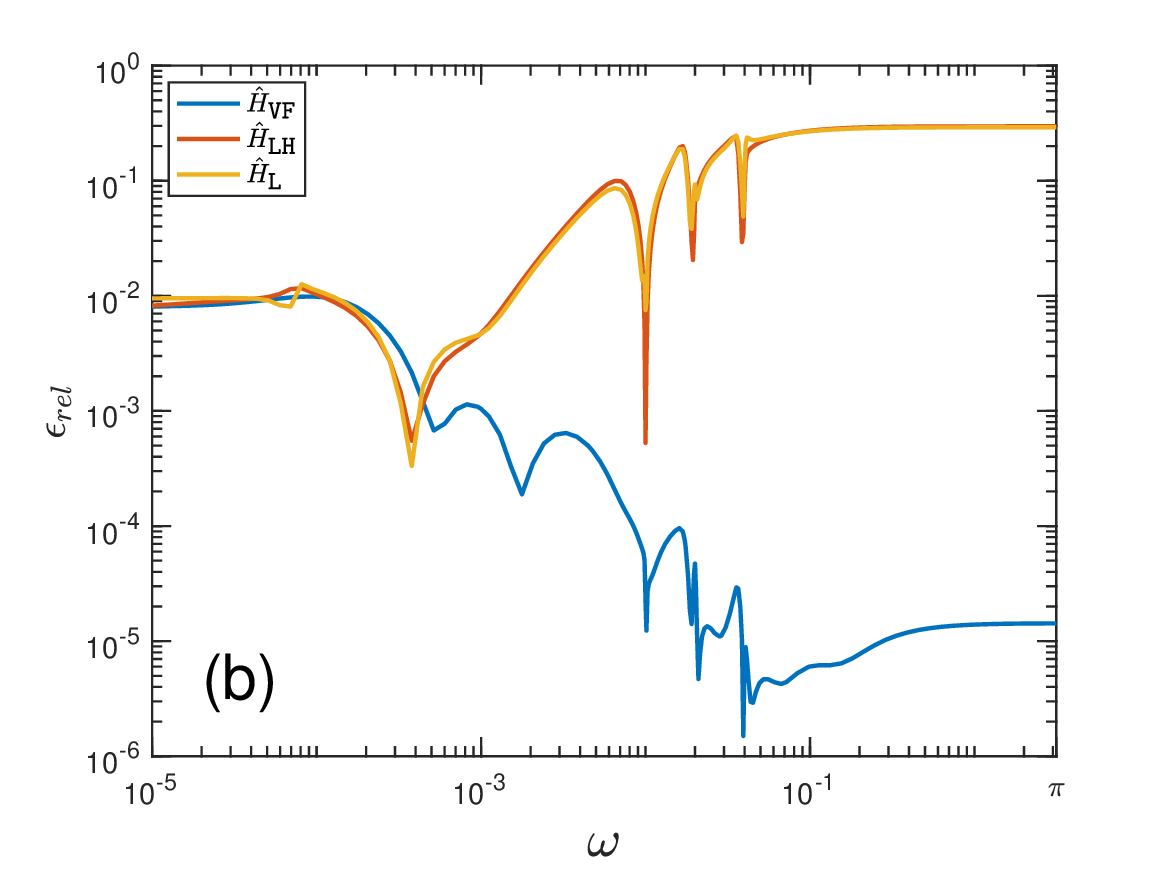}}
     
     \subfloat{\includegraphics[scale = .32]{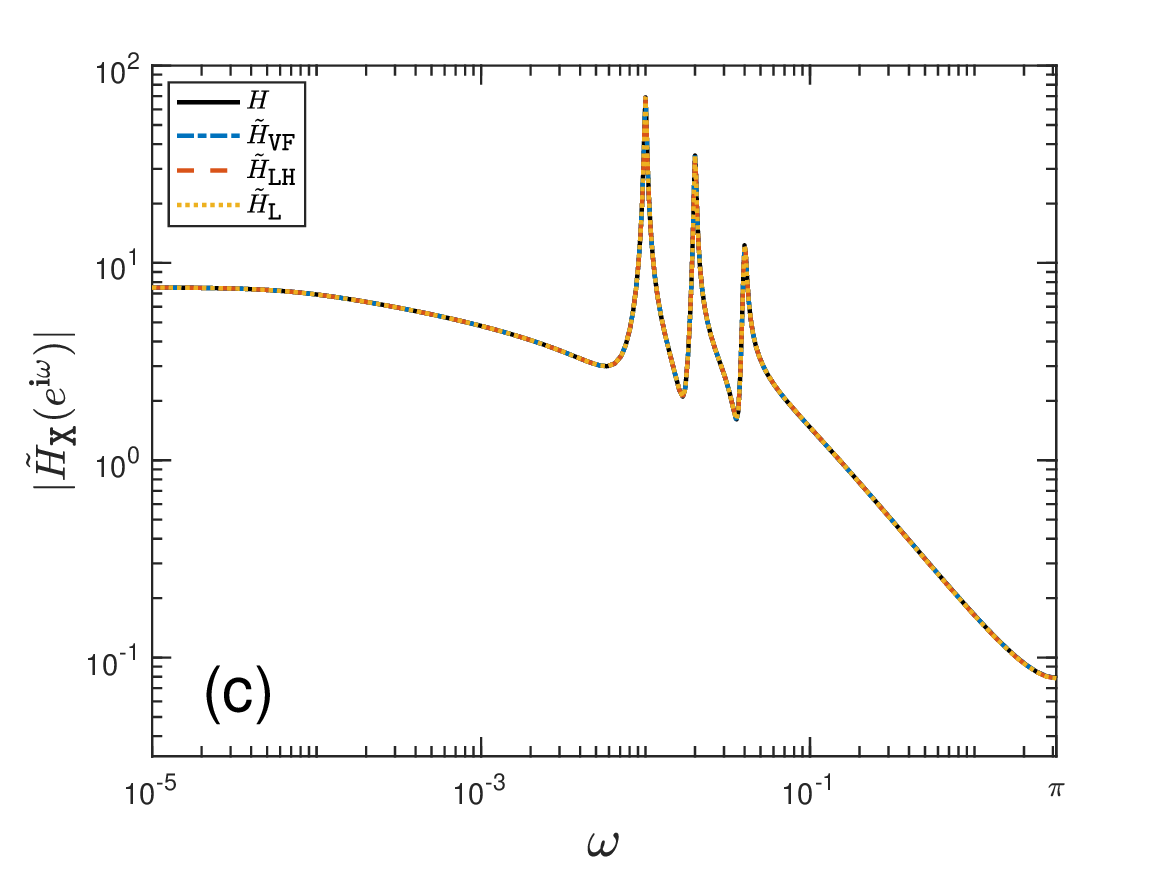}}
     \subfloat{\includegraphics[scale = .32]{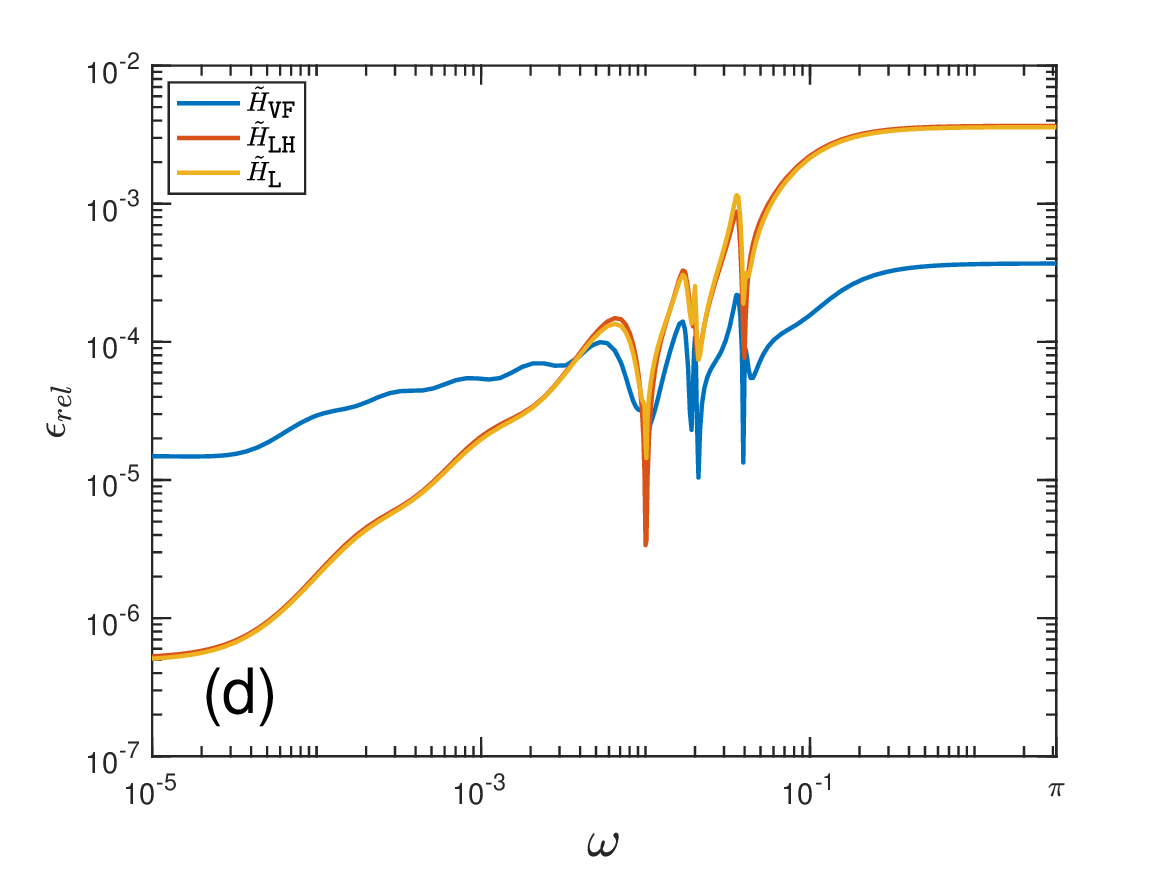}}
     \caption{Frequency responses ((a) and (c)) and point-wise relative errors ((b) and (d)) of DDROMs approximating $H_4$,  using frequency data recovered from time-domain data via \cref{alg:DataInform} ((a) and (b)) and true frequency data ((c) and (d)).}
     \label{fig:RelErrAndFreqRespPenzlROMs}
 \end{figure}

The amplitude frequency response of $H_4$ and the three DDROMs constructed from recovered frequency information, i.e., $\widehat H_{\texttt L}, \widehat H_{\texttt{LH}},$ and  $\widehat H_{\texttt{VF}}$, are shown in \cref{fig:RelErrAndFreqRespPenzlROMs}-(a).  For this example, we see that unlike the previous cases, these DDROMs did not perform  as well as the DDROMs constructed from true data (shown in \cref{fig:RelErrAndFreqRespPenzlROMs}-(c)).  This is also reflected in the error plots, \cref{fig:RelErrAndFreqRespPenzlROMs}-(b) for DDROMs from recovered data and \cref{fig:RelErrAndFreqRespPenzlROMs}-(d) for DDROMs from true data.  Quantitatively, the $\mathcal H_{\infty}$ errors shown in \cref{tab:H2ErrPenzl} show that 
even though DDROMs learned from recovered frequency data provide reasonably accurate approximations to the original system (especially true for $\widehat H_{\texttt{VF}}$ with a relative accuracy of $10^{-3}$), they have degraded accuracy compared to the corresponding DDROM learned from true data.  The Vector Fitting model was affected the least (see the last row of \cref{tab:H2ErrPenzl}).
These observations are not  surprising.  While the Loewner and Hermite Loewner models are interpolatory, Vector Fitting model is least-squares based, thus is expected to be more robust to the perturbations/inaccuracies in the data (as is the case here).

For the Vector Fitting model from recovered data ($\widehat H_{\texttt{VF}}$) the degraded accuracy can be explained by the lower accuracy of the recovered frequency information at small frequencies (see \cref{fig:RelErrAndFreqRespPenzl}-(b)).  Indeed, \cref{fig:RelErrAndFreqRespPenzlROMs}-(b) shows that $\widehat H_{\texttt{VF}}$ has the lowest accuracy at lower frequencies, and actually outperforms $\widetilde H_{\texttt{VF}}$ at high frequencies.  This increase in performance at high frequencies is expected to be due to the weighting used to construct $\widehat H_{\texttt{VF}}$. 

Both $\widehat H_{\texttt L}$ and $\widehat H_{\texttt{LH}}$, the Loewner and Hermite Loewner model formed from recovered frequency data were unstable.  The frequency response and errors for these DDROMs shown in \cref{fig:RelErrAndFreqRespPenzlROMs} as well as the $\mc H_{\infty}$ errors reported in \cref{tab:H2ErrPenzl} are calculated from the stable part of these systems, which were order 12 and order 11, respectively.  In addition, we note that the amplitude frequency response of these systems significantly differs from that of $H_4$ (and all other DDROMs) for high frequencies.  This cannot be explained by the accuracy of the recovered frequency information since the recovered frequency information is highly accurate in the high frequency regime.  

The recent work~\cite{DramacP2020NoisyLoewner} that studied the Loewner-based modeling in the case of perturbed/noisy data could provide some hints in better understanding this behavior. Moreover, the work~\cite{BeattieGWyatt2012Inexact} connected the impact of inexact solves in projection-based interpolatory model reduction to a backward error analysis in Loewner based modeling. This could also shed some light on the behavior of Loewner models.
Investigating these findings in more detail is left to a future work where a more detailed benchmark comparison of many frequency-based modeling techniques will be present. In this paper, our main focus was on the data-informativity framework itself.

\begin{table}[!htb]
    \centering
    \caption{$\mathcal H_{\infty}$ errors between the true transfer function $H_4$, DDROMs constructed from true data $\widetilde H_r$, and DDROMs constructed from learned data $\widehat H_{\texttt X}$}
        \begin{tabular}{|c|c|c|c|}
            \hline
             Relative ${\mathcal H_{\infty}}$ error & Loewner & Hermite Loewner & Vector Fitting\\
            \hline
            \rule{0pt}{3ex}${\|H_4-\widehat H_r\|_{\mathcal H_{\infty}}}/{\|H_4\|_{\mathcal H_{\infty}}}$&$4.71\times 10^{-2}$&$3.80\times 10^{-2}$ & $1.01\times 10^{-3}$\\[1ex]\hline
            \rule{0pt}{3ex}$\|H_4-\widetilde H_r\|_{\mathcal H_{\infty}}/\|H_4\|_{\mathcal H_{\infty}}$&$1.29\times 10^{-4}$&$7.33\times10^{-5}$ & $5.43 \times 10^{-5}$\\[1ex]\hline
            \rule{0pt}{3ex}$\|\widetilde H_r-\widehat H_r\|_{\mathcal H_{\infty}}/\|\widetilde H_r\|_{\mathcal H_{\infty}}$&$4.72\times 10^{-2}$&$3.81\times 10^{-2}$ & $1.01\times 10^{-3}$\\[1ex]\hline
        \end{tabular}
    \label{tab:H2ErrPenzl}
\end{table}
\section{Conclusions}
{We presented an analysis of the data informativity approach to moment matching. Our analysis shows how to construct theoretically equivalent but  better conditioned linear systems used in obtaining frequency data from time-domain data, develops an error indicator, and removes the assumption that the system order is known.  This analysis leads to a robust algorithm for recovering frequency information from a single time-domain trajectory of a linear dynamical system.  These recovered frequency data were then used to construct DDROMs via the Loewner and Hermite Loewner frameworks and Vector Fitting. The least-square formulation via Vector Fitting  showed little sensitivity to the errors in the recovered frequency information.}

Future work includes using this recovered frequency information to construct $\mathcal H_2$ optimal reduced order models via the Iterative Rational Krylov Algorithm. In this work,  we used simulated time-domain data. We wish to explore the effectiveness of our algorithm on data that come from physical experiments. 

\bibliographystyle{siamplain}
\bibliography{references2}

\end{document}